\newcommand*{\abs}[1]{\left\lvert#1\right\rvert}
\newcommand*{\norm}[1]{\left\lVert#1\right\rVert}
\newcommand{\R}{\mathbb R}
\newcommand{\A}{\mathcal A}
\DeclareMathOperator{\sign}{sign}
\newtheorem{theorem}{Theorem}[section]
\newtheorem{lemma}{Lemma}[section]
\newtheorem{corollary}{Corollary}[section]
\newtheorem{proposition}{Proposition}[section]
\theoremstyle{definition}
\newtheorem{definition}{Definition}[section]
\numberwithin{equation}{section}
\begin{document}

%\markboth{Yuliya Mishura, Kostiantyn Ralchenko, Mounir Zili, Eya Zougar}{Fractional stochastic heat equation with piecewise constant coefficients}

\title[Fractional stochastic heat equation]{Fractional stochastic heat equation\\ with piecewise constant coefficients}

\author{Yuliya Mishura}

\address[Y. Mishura]{Department of Probability Theory, Statistics and Actuarial Mathematics, Taras Shevchenko National University of Kyiv, 64 Volodymyrska St.,
Kyiv, 01601,
Ukraine}
\email{myus@univ.kiev.ua}

\author{Kostiantyn Ralchenko}

\address[K. Ralchenko]{Department of Probability Theory, Statistics and Actuarial Mathematics, Taras Shevchenko National University of Kyiv, 64 Volodymyrska St.,
Kyiv, 01601,
Ukraine}
\email{k.ralchenko@gmail.com}

\author{Mounir Zili}

\address[M. Zili]{University of Monastir, Department of Mathematics, Faculty of sciences of Monastir,  Avenue de l'environnement, 5019 Monastir, Tunisia}
\email{mounir.zili@fsm.rnu.tn}

\author{Eya Zougar}

\address[E. Zougar]{University of Monastir, Department of Mathematics, Faculty of sciences of Monastir,  Avenue de l'environnement, 5019 Monastir, Tunisia}
\email{zougareya@gmail.com}

\begin{abstract}
We introduce  a  fractional stochastic heat equation with second order elliptic operator in divergence form,  having a piecewise constant diffusion coefficient, and driven by an infinite-dimensional fractional Brownian motion.  We characterize the fundamental solution of its deterministic part, and prove the existence and the uniqueness of its solution.
\end{abstract}

\keywords{Stochastic partial differential equation; discontinuity of coefficients; fundamental solution; mild solution; infinite-dimensional fractional Brownian motion.}

\subjclass[2010]{60G22, 60H15, 35R60.}

\maketitle

\section{Introduction}

In the last years, stochastic partial differential equations (SPDEs) driven by  different types of fractional noises have attracted a great attention of the probability community. In several cases, the noise is expressed as a function of a formal derivative of a one-dimensional fractional Brownian motion. 

A  one-dimensional fractional Brownian motion (fBm) $ (B^H(t))_{t\ge 0}$ with Hurst index $H \in (0,1)$  is a centered Gaussian process on some probability space $(\Omega , {\mathcal F}, {\mathbb P})$ with covariance function
\begin{equation}
\label{covfBm}
{\mathbb E}(B^H(t) B^H(s))= \frac{1}{2} \Big( t^{2H} + s^{2H} - | t-s |^{2H} \Big) .
\end{equation}
So, an fBm is a natural extension of a standard Brownian motion because taking $H= \frac{1}{2}$ in (\ref{covfBm}) we get
$${\mathbb E}(B^{1/2}(t) B^{1/2}(s))= s \wedge t .$$
Firstly introduced by Kolmogorov in \cite{Kolmogorov},  since the appearance of the paper \cite{Mandelbrot}, the interest in this process has increased enormously
as an important ingredient of fractal models because of such  characteristics as self-similarity, H\"older continuity and long-range dependence. Theory of stochastic calculus with respect to fBm has been developed, leading  to the consideration  of several types  of SPDEs driven by a noise depending, in one way or another, on a fBm (e.\,g.\ \cite{Nualart,Hu-Y,Marta2,Balan,BalanTudor,Jiang,Bo,c} and references therein).

In this paper, we consider the following SPDE:
\begin{equation}\label{e:2}
\begin{cases}
d u(t, x) = {\mathcal L} u(t, x) \,dt + h\bigl(u(t,x)\bigr)\,W^H (dt,x), 
& t\in (0,T], \; x\in \R ,\\
u(0,x) = 0, & x\in\R,
\end{cases}
\end{equation}
where $h$ is an affine function,
\begin{equation}\label{lin}
h(z) = h_1 z + h_2,
\end{equation}
$h_1, h_2\in\R$ and the operator  ${\mathcal L}$ is defined by
\[
 {\mathcal L} = \frac{1}{2\rho (x)} \frac{d}{dx} \left( \rho (x)A(x) \frac{d}{dx} \right).
\]
Here the coefficients $A$ and $\rho$ have the following form 
$$A(x)=  a_1 {\mathbf 1}_{\{ x \le 0 \} } + a_2 {\mathbf 1}_{\{ x > 0\}}  \quad \hbox{and} \quad
\rho(x)= \rho_1 {\mathbf 1}_{\{ x \le 0 \} } +\rho_2 {\mathbf 1}_{\{ x > 0\}},
$$
$a_i, \rho_i$ ($i=1, 2$) are  positive constants, and
$ \frac{df}{dx}$ denotes the derivative  of $f$ in the distributional sense. 
The term $W^H$  refers to an $L^2(\R)$-valued fractional Brownian motion with  Hurst index $H\in (\frac12,1)$, defined by 
\begin{equation}
\label{whl2}
   W^H(\cdot,t): = \sum_{j=1}^\infty \lambda_j\,e_j(\cdot)\,B^H_j(t),
\end{equation}
where $B^H_j=\{B^H_j(t), t\geq 0 \}$, $j\in \mathbb{N}$  is a sequence of one-dimensional fractional Brownian motions with the Hurst index $H\in (1/2, 1)$ starting at  the origin, $\{\lambda_j,j\in \mathbb{N} \}$ is a sequence of positive real numbers and $\{e_j,j\in \mathbb{N} \}$ is an orthonormal basis of $L^2(\R),$ such that:
\begin{equation}\label{ej}
  \sup_j \left\|e_j\right\|_\infty < \infty, \;\; \; \text{and} \;\;\; \sum_{j=1}^\infty \lambda_j < \infty ,
\end{equation}
where $\norm{\,\cdot\,}_\infty$ denotes the norm in $L^\infty(\R)$.
The series (\ref{whl2}) converges  a.\,s.\ in $L^2(\R)$ because of  the assumptions \eqref{ej}, see \cite{NU}.

Equation (\ref{e:2})  can be considered as a stochastic counterpart of the parabolic equation
\begin{equation}\label{e:3}
  \frac{\partial u(t, x)}{\partial t} = {\mathcal L} u(t, x),  
\end{equation}
which arises in mathematical  modelling of  diffusion phenomena in many areas, such as    ecology \cite{Can},   biology \cite{Nic} etc.
The non-smoothness of the coefficients reflects the heterogeneity of the medium in which the process under study propagates. An explicit expression of the fundamendal solution of equation (\ref{e:3}) was given in \cite{Zi,Zil,CZ,MZ0}. 

SPDEs with the operator ${\mathcal L}$ have been introduced and investigated in \cite{ZZ,ZZ2}, with an additive noise 
$W$ defined as a  centered Gaussian field
$ W = \{ W(t,C); t \in [0,T ], C \in B_b({\mathbb R}) \}$ with covariance
$${\mathbb E} (W(t,C)W(s,D)) = (t \wedge s) \lambda (C \cap D),$$
where
$ \lambda$  denotes the Lebesgue measure.
Comparing to those two articles, the equation (\ref{e:2}) contains more complicated noise, which is multiplicative and fractional.
Its study requires more sophisticated stochastic and Hilbert analysis tools.

Other forms of SPDE (\ref{e:2}), with other different operators and similar or different Gaussian noises, have been recently studied by many authors (e.\,g.\ \cite{Marta,Marta2,Tindel-Tudor,c} and references therein).

We make here a first step in the study of SPDEs of the form (\ref{e:2}). More precisely, we prove  existence and uniqueness of a mild solution to equation (\ref{e:2}). In addition to  the introduction of a Besov-type Banach space and the use of the Riemann--Liouville fractional derivatives,   a part of the theory of generalized Lebesgue--Stieljes integration with respect to fBm with Hurst index $H > 1/2$ is employed, and the proofs  require many  integration techniques, calculation and analysis tools; they are particularly  based on a deep characterization  of the explicit form of the fundamental solution of (\ref{e:3}). 
 
 The paper is organized as follows.
In the second section  we present some useful characterizations and upper bounds of the fundamental solution of equation  (\ref{e:3}), and   the third one  is devoted to the proof of the existence and uniqueness of a mild solution to equation (\ref{e:2}).
Some technical lemmata are proved in the appendix.

\section{Some properties of the fundamental solution}

The fundamental solution of the deterministic partial differential equation (\ref{e:3})
is given by
\begin{multline*}
G(t-s, x, y)
 =   \Bigg[ \frac{1}{\sqrt{2 \pi (t-s)}}
 \left( \frac{ {\bf 1}_{ \{ y \le 0 \} } }{\sqrt{a_1}} + \frac{ {\bf 1}_{ \{y>0\} } }{\sqrt{a_2}} \right)  \left\{ \exp \left( - \frac{(f(x) - f(y))^2}
{2 (t-s)}\right) \right. \\
  +  \left.
\beta\, {\rm sign} (y)\, \exp
\left( - \frac{(\mid f(x) \mid  + \mid f(y) \mid)^2}{2(t-s)}\right) \right\} \Bigg] {\bf 1}_{\{s < t\}},
\end{multline*}
with
$$f(y)= \frac{y}{\sqrt{a_1}}{\mathbf 1}_{\{y\leq 0\}}+\frac{y}{\sqrt{a_2}} {\mathbf 1}_{\{y>0\}},
\quad
\alpha  = 1- \frac{\rho_1 a_1}{\rho_2a_2},
\quad\text{and}\quad
\beta= \frac{\sqrt{a_1}+ \sqrt{a_2} (\alpha -1)}{\sqrt{a_1}-\sqrt{a_2} (\alpha -1)}   .$$
For the proof see, e.\,g., \cite{Zi,Zil} and \cite{CZ}.

 The fundamental solution $G$ satisfies the following  properties.

\begin{lemma}\label{lem1} For every $0\le s< t \le T$ and $x,y \in \R,$  we have
\[
\left|G(t-s, x, y)\right| \leq C_{a_1,a_2}\, \frac{1}{\sqrt{t-s}}\exp\left( - \frac{(f(x)-f(y))^2}{2(t-s)}\right),
\]
with $C_{a_1,a_2}:=\frac{ 1+\vert \beta \vert}{\sqrt{2 \pi }}\;\left( \frac{1}{\sqrt{a_1}}+\frac{1}{\sqrt{a_2}} \right) .$
\end{lemma}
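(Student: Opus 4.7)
The plan is to apply the triangle inequality directly to the explicit closed-form expression for $G(t-s,x,y)$, bound each of the two resulting terms by the single Gaussian kernel that appears on the right-hand side, and absorb the remaining constants into $C_{a_1,a_2}$. The overall prefactor $\frac{1}{\sqrt{2\pi(t-s)}}$ is already in the required form, so the whole argument reduces to a pointwise estimate of the indicator-based coefficient and of the expression in curly braces.

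First I would note that the two indicators $\mathbf{1}_{\{y\le 0\}}$ and $\mathbf{1}_{\{y>0\}}$ are disjoint, so
\[
\frac{\mathbf{1}_{\{y\le 0\}}}{\sqrt{a_1}} + \frac{\mathbf{1}_{\{y>0\}}}{\sqrt{a_2}} \;\le\; \frac{1}{\sqrt{a_1}}+\frac{1}{\sqrt{a_2}},
\]
which accounts for the second factor in the claimed constant. Applying the triangle inequality inside the curly braces, and using $|\beta\,\sign(y)|\le|\beta|$, I am then left with controlling
\[
\exp\!\left(-\frac{(f(x)-f(y))^2}{2(t-s)}\right) + |\beta|\exp\!\left(-\frac{(|f(x)|+|f(y)|)^2}{2(t-s)}\right).
\]

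The only nontrivial observation is that the second exponential is dominated by the first, because
\[
(|f(x)|+|f(y)|)^2 - (f(x)-f(y))^2 = 2\bigl(|f(x)||f(y)| + f(x)f(y)\bigr) \ge 0,
\]
since $|f(x)||f(y)|=|f(x)f(y)|\ge -f(x)f(y)$. Hence the bracketed expression above is bounded by $(1+|\beta|)\exp\bigl(-(f(x)-f(y))^2/(2(t-s))\bigr)$, and combining this with the prefactor estimate produces exactly the constant $C_{a_1,a_2}=\frac{1+|\beta|}{\sqrt{2\pi}}\bigl(\tfrac{1}{\sqrt{a_1}}+\tfrac{1}{\sqrt{a_2}}\bigr)$ in front of the Gaussian kernel, as claimed. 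I do not anticipate any genuine obstacle: the lemma is essentially a triangle-inequality bookkeeping exercise, the only content being the one-line pointwise comparison of the two Gaussians above.
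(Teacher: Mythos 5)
Your proof is correct and follows essentially the same route as the paper: triangle inequality on the explicit formula, the bound $\frac{\mathbf{1}_{\{y\le 0\}}}{\sqrt{a_1}}+\frac{\mathbf{1}_{\{y>0\}}}{\sqrt{a_2}}\le\frac{1}{\sqrt{a_1}}+\frac{1}{\sqrt{a_2}}$, and the domination of the second Gaussian by the first. The only cosmetic difference is that you verify $(|f(x)|+|f(y)|)^2\ge(f(x)-f(y))^2$ by expanding the squares, whereas the paper invokes the monotonicity of $x\mapsto e^{-x^2}$ together with the triangle inequality $|f(x)-f(y)|\le|f(x)|+|f(y)|$; these are equivalent.
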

\begin{proof}
 We have for every $x,y\in \R$ and $0\le s< t \le T$,
\begin{align*}
\left|G(t-s, x, y)\right| 
&=  \Biggl| \frac{1}{\sqrt{2 \pi (t-s)}}\left( \frac{ {\bf 1}_{ \{ y \le 0 \} } }{\sqrt{a_1}}+\frac{ {\bf 1}_{ \{ y > 0 \} } }{\sqrt{a_2}} \right)
\biggl\{ \exp \left( - \frac{(f(x) - f(y))^2}{2 (t-s)}\right)
\\
&\quad+ \beta\, \sign (y)\, \exp\left( - \frac{(\abs{f(x)} + \abs{f(y)})^2}{2(t-s)}\right) \biggr\}  \Biggr|
\\
&\leq \frac{1}{\sqrt{2 \pi (t-s)}}\left( \frac{1 }{\sqrt{a_1}}+\frac{ 1 }{\sqrt{a_2}} \right)
\biggl\{ \exp \left( - \frac{(f(x) - f(y))^2}{2 (t-s)}\right)\\
&\quad+
\left|\beta\right|\,  \exp\left( - \frac{(\abs{f(x)}  + \abs{f(y)})^2}{2(t-s)}\right) \biggr\}.
\end{align*}

Since the function $ x\mapsto \exp(-x^2)$ is decreasing on the interval $[0, + \infty )$, we have
\[\exp\left(- \frac{\left(\left|f(x)\right|+\left|f(y)\right|\right)^2}{2(t-s)}\right)\leq\, \exp\left(-\frac{(f(x)-f(y))^2}{2(t-s)} \right) , \]
and consequently
 \[
 \left|G(t-s, x, y)\right|\, \leq\, \frac{ (1+\vert \beta \vert)}{\sqrt{2 \pi (t-s)}}\;\left( \frac{1}{\sqrt{a_1}}+\frac{1}{\sqrt{a_2}} \right)\exp \left( - \frac{(f(x) - f(y))^2}{2 (t-s)}\right) .
\]
\end{proof}
\begin{corollary}\label{cor2}For every $0\le s< t \le T$ $x \in \R$ and $y\in \R,$ we have
\[
\max \Big( \int_{\R} \left|G(t-s, z, y)\right|\,dz, \int_{\R} \left|G(t-s, x, z)\right|\,dz  \Big) \leq\,C_1(a_1,a_2),
\]
with
$C_1(a_1,a_2)=\left(\frac{1}{\sqrt{a_1}}+\frac{1}{\sqrt{a_2}}\right)\,(1+\vert \beta \vert) \,\max(\sqrt{a_1},\sqrt{a_2}).$

\end{corollary}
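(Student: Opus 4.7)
The plan is to apply Lemma~\ref{lem1} and then reduce both integrals to a single standard Gaussian integral via an appropriate change of variables that accounts for the piecewise nature of $f$.

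First I would use Lemma~\ref{lem1} to replace the absolute value $|G(t-s,x,z)|$ (respectively $|G(t-s,z,y)|$) with the bound
\[
C_{a_1,a_2}\,\frac{1}{\sqrt{t-s}}\,\exp\!\left(-\frac{(f(x)-f(z))^2}{2(t-s)}\right),
\]
so the problem reduces to estimating $\int_\R \exp(-(f(x)-f(z))^2/(2(t-s)))\,dz$ uniformly in $x$ and $t-s$, and analogously with $f(y)-f(z)$ for the other integral.

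Next I would split the integration domain at $z=0$ to exploit the definition of $f$. On $(-\infty,0]$ we have $f(z)=z/\sqrt{a_1}$, so the substitution $u=z/\sqrt{a_1}$ gives $dz=\sqrt{a_1}\,du$ with $u$ running over $(-\infty,0]$; on $(0,\infty)$ we have $f(z)=z/\sqrt{a_2}$, yielding $dz=\sqrt{a_2}\,du$ with $u>0$. Therefore
\[
\int_\R \exp\!\left(-\frac{(f(x)-f(z))^2}{2(t-s)}\right)dz
= \sqrt{a_1}\!\int_{-\infty}^{0}\! e^{-\frac{(f(x)-u)^2}{2(t-s)}}du
+\sqrt{a_2}\!\int_{0}^{\infty}\! e^{-\frac{(f(x)-u)^2}{2(t-s)}}du.
\]
Bounding both prefactors by $\max(\sqrt{a_1},\sqrt{a_2})$ and recombining the two half-line integrals into a full integral over $\R$, the right-hand side is dominated by $\max(\sqrt{a_1},\sqrt{a_2})\sqrt{2\pi(t-s)}$ from the standard Gaussian identity.

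Multiplying by the prefactor $C_{a_1,a_2}/\sqrt{t-s}$ the $\sqrt{t-s}$-factors cancel, leaving exactly $C_{a_1,a_2}\sqrt{2\pi}\max(\sqrt{a_1},\sqrt{a_2}) = C_1(a_1,a_2)$ after substituting the explicit value of $C_{a_1,a_2}$ from Lemma~\ref{lem1}. The same argument applied to the variable $z$ (the kernel $G$ enters symmetrically through $(f(x)-f(z))^2$ or $(f(z)-f(y))^2$ in the bound of Lemma~\ref{lem1}) yields the estimate for $\int_\R |G(t-s,z,y)|\,dz$. There is no real obstacle here; the only point requiring attention is that the change of variables must be carried out separately on $\{z\le 0\}$ and $\{z>0\}$, and the inequality $\max(\sqrt{a_1},\sqrt{a_2})$ is used precisely to merge the two pieces into one full Gaussian integral.
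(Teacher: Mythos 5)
Your proposal is correct and follows essentially the same route as the paper's proof: apply Lemma~\ref{lem1}, split the integral at $z=0$, substitute $u=z/\sqrt{a_i}$ on each half-line, bound the Jacobian factors by $\max(\sqrt{a_1},\sqrt{a_2})$ so the two pieces recombine into a single full Gaussian integral, and cancel the $\sqrt{t-s}$ factors. The constant you obtain matches $C_1(a_1,a_2)$ exactly.
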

\begin{proof}$:$
By the virtue of Lemma  \ref{lem1}, we get:
$$ \int_{\R} \left|G(t-s, z, y)\right|\,dz \leq   \frac{ (1+\vert \beta \vert)}{\sqrt{2 \pi (t-s)}}\;\left( \frac{1}{\sqrt{a_1}}+\frac{1}{\sqrt{a_2}} \right)\int_{\R}\exp \left( - \frac{(f(z) - f(y))^2}{2 (t-s)}\right)\;dz .$$
Further,
\begin{multline*}
\int_{\R}\exp \left( - \frac{(f(z) - f(y))^2}{2 (t-s)}\right)\;dz =\int_{-\infty}^0\exp \left( - \frac{(\frac{z}{\sqrt{a_1}} - f(y))^2}{2 (t-s)}\right)\,dz \\+ \int_{0}^{\infty}\exp \left( - \frac{(\frac{z}{\sqrt{a_2}} - f(y))^2}{2 (t-s)}\right)\,dz.
\end{multline*}
By the change of variables $\;U= \frac{\frac{z}{\sqrt{a_i}}-f(y)}{\sqrt{2(t-s)}}\Longrightarrow\;dU= \frac{dz}{\sqrt{2(t-s)\,a_i}}$, with $i= 1$ in the first integral and $i=2$ in the second one, we obtain:
\begin{align}
\int_{\R}\exp \left( - \frac{(f(z) - f(y))^2}{2 (t-s)}\right)\,dz 
&\leq \max(\sqrt{a_1},\sqrt{a_2})\,\sqrt{2(t-s)}\int_{\R}\exp(-U^2)\,dU
\notag
\\
&=  \max(\sqrt{a_1},\sqrt{a_2})\,\sqrt{2(t-s)\,\pi}.
\label{Majerror}
\end{align}
Thus,
\[
\int_{\R} \left|G(t-s, z, y)\right|\,dz \leq \,\left(\frac{1}{\sqrt{a_1}}+\frac{1}{\sqrt{a_2}}\right)\,(1+\vert \beta \vert) \,\max(\sqrt{a_1},\sqrt{a_2}) .
\]
Concerning the second integral, also by the virtue of Lemma  \ref{lem1}  and (\ref{Majerror}), we get:
\begin{align*}
\int_{\R} \left|G(t-s, x, z)\right|\,dz &\leq   \frac{ (1+\vert \beta \vert)}{\sqrt{2 \pi (t-s)}}\;\left( \frac{1}{\sqrt{a_1}}+\frac{1}{\sqrt{a_2}} \right)\int_{\R}\exp \left( - \frac{(f(x) - f(z))^2}{2 (t-s)}\right)\;dz
\\
&\leq\,\left(\frac{1}{\sqrt{a_1}}+\frac{1}{\sqrt{a_2}}\right)\,(1+\vert \beta \vert) \,\max(\sqrt{a_1},\sqrt{a_2}) .
\end{align*}
\end{proof}

\begin{lemma}\label{lem3}
\begin{enumerate}[(i)]
\item
For every $\eta > 0$, there exists  a strictly positive constant $C$ such that, for every $0  < t \le T$ and  $x, y \in \R$,
\[
 \max \Big( \int_{\R}\left|\partial_tG(t, z , y)\right|^\eta\,dz , \int_{\R}\left|\partial_tG(t, x , z)\right|^\eta\,dz \Big) \leq C t^{-\frac{3}{2}\eta+\frac12}.
\]

\item
For every $\eta > 0$, there exists  a strictly positive constant $C$ such that, for every $0 \le s < t \le T$ and  $y \in \R$,
\[
\int_{\R}\left|\frac{\partial^2}{\partial t \partial s}G(t-s, z , y)\right|^\eta\,dz
\leq C (t-s)^{-\frac{5}{2}\eta+\frac12}.
\]
\end{enumerate}
\end{lemma}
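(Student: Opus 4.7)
The plan is to differentiate the explicit Gaussian expression for $G$ with respect to the time-lag $\tau=t-s$, and then reduce the resulting $L^\eta$ integrals to classical Gaussian integrals by the piecewise linear change of variable dictated by $f$. Writing
\[
G(\tau,x,y)=g_1(\tau,x,y)+g_2(\tau,x,y),
\]
where $g_1,g_2$ are the two summands in the defining formula for $G$, with squared arguments $\xi_1^2=(f(x)-f(y))^2$ and $\xi_2^2=(\abs{f(x)}+\abs{f(y)})^2$, each summand is a constant multiple of $\tau^{-1/2}\exp(-\xi^2/(2\tau))$ with $\xi$ independent of $\tau$. Consequently, for $k=1,2$,
\[
\partial_\tau^k\bigl[\tau^{-1/2}e^{-\xi^2/(2\tau)}\bigr]=\tau^{-(2k+1)/2}P_k(\xi^2/\tau)\,e^{-\xi^2/(2\tau)},
\]
where $P_k$ is an explicit polynomial of degree $k$ with bounded coefficients.

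For part (i), since $\partial_t G(t,x,y)=\partial_\tau G(\tau,x,y)|_{\tau=t}$ and $|\beta\sign(y)|\le|\beta|$, the above yields
\[
\abs{\partial_t G(t,x,y)}\le\frac{C}{t^{3/2}}\sum_{i=1}^{2}\left(1+\frac{\xi_i^2}{t}\right)\exp\left(-\frac{\xi_i^2}{2t}\right).
\]
I raise this to the $\eta$-th power and integrate in $z$ in either the first or second slot of $G$. Splitting $\R=(-\infty,0]\cup(0,\infty)$, on each half-line $f$ is linear, so the substitution that turns $\xi_1/\sqrt{t}$ (respectively $\xi_2/\sqrt{t}$) into a new variable $u$ is a linear bijection of the half-line onto a half-line with Jacobian of order $\sqrt{t}$. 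The integral reduces to a constant multiple of $\sqrt{t}\int_{\R}(1+u^2)^\eta e^{-\eta u^2/2}\,du$, which is finite for every $\eta>0$, giving the bound $Ct^{-3\eta/2+1/2}$.

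For part (ii), the key observation is that $G$ depends on $(t,s)$ only through $\tau=t-s$, so $\partial_t\partial_s G=-\partial_\tau^2 G$. By the computation above, this is of order $\tau^{-5/2}$ times $P_2(\xi_i^2/\tau)\exp(-\xi_i^2/(2\tau))$, and the very same piecewise substitution yields the Jacobian $\sqrt{t-s}$ and a finite integral in $u$, producing the bound $C(t-s)^{-5\eta/2+1/2}$. The main technical point in both parts is verifying that the polynomial prefactors $P_k(\xi^2/\tau)$ are absorbed by the Gaussian factor after raising to the $\eta$-th power; this uses the integrability of $(1+u^2)^{k\eta}e^{-\eta u^2/2}$ for every $\eta>0$, which is precisely why no additional constraint on $\eta$ is required.
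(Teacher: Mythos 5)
Your proposal is correct and follows essentially the same route as the paper: explicit differentiation of the Gaussian summands in $\tau=t-s$, absorption of the polynomial prefactors $P_k(\xi^2/\tau)$ after raising to the power $\eta$, and the piecewise-linear substitution (with Jacobian of order $\sqrt{\tau}$) reducing everything to a convergent integral of the form $\int_{\R}(1+u^2)^{k\eta}e^{-\eta u^2/2}\,du$, which yields the exponents $-\tfrac32\eta+\tfrac12$ and $-\tfrac52\eta+\tfrac12$. Your packaging via the general formula $\partial_\tau^k[\tau^{-1/2}e^{-\xi^2/(2\tau)}]=\tau^{-(2k+1)/2}P_k(\xi^2/\tau)e^{-\xi^2/(2\tau)}$ and the identity $\partial_t\partial_s G=-\partial_\tau^2 G$ is merely a more compact rendering of the paper's term-by-term computation.
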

\begin{proof}
$(i)$
For every $x, y\in \R,$ we have
\begin{align*}
\left|\partial_tG(t, x, y)\right|
&\leq \frac{1}{\sqrt{2\pi}}\left(\frac{1}{\sqrt{a_1}}+\frac{1}{\sqrt{a_2}} \right)  \Bigg[\frac{1}{2}t^{-3/2}\bigg\{\exp \left( - \frac{(f(x) - f(y))^2}{2 t}\right)
\\
&\quad
+|\beta| \, \exp\left( - \frac{(\abs{f(x)}  + \abs{f(y)})^2}{2t}\right) \bigg\}
\\
&\quad+\frac{1}{2}t^{-1/2}\Big\{\frac{(f(x)-f(y))^2}{t^2}
\exp \left( - \frac{(f(x) - f(y))^2}{2 t}\right)
\\
&\quad+|\beta|\,\frac{(\left|f(x)\right|+\left|f(y)\right|)^2}{t^2}\exp \left( - \frac{(|f(x)|+|f(y)|)^2}{2 t}\right)\Big\} \Bigg]
\\
&\leq \frac{1}{\sqrt{2\pi}}\left(\frac{1}{\sqrt{a_1}}+\frac{1}{\sqrt{a_2}} \right)  
\Bigg[\frac{(1+\,|\beta|)}{2}\,t^{-3/2}\exp \left( - \frac{(f(x) - f(y))^2}{2 t}\right)
  \\
&\quad+\frac{1}{2}t^{-3/2}\Big\{\frac{(f(x)-f(y))^2}{t}\exp \left( - \frac{(f(x) - f(y))^2}{2t}\right)
\\
&\quad+|\beta|\frac{(\left|f(x)\right|+\left|f(y)\right|)^2}{t}\exp \left( - \frac{(|f(x)|+|f(y)|)^2}{2t}\right)\Big\} \Bigg]
\end{align*}
Then, by using the fact that for every $a, x, y , z >0,$  we have $(x+y + z )^a\leq C(x^a+y^a + z^a)$, we get
\begin{align*}
\left|\partial_tG(t, x, y)\right|^\eta
&\leq \frac{C}{(\sqrt{2\pi})^\eta}
\left(\frac{1}{\sqrt{a_1}}+\frac{1}{\sqrt{a_2}} \right)^\eta 
\Bigg[\frac{(1+|\beta|)^\eta}{2^\eta}t^{-\frac{3}{2}\eta}\exp \left( -\eta \frac{(f(x) - f(y))^2}{2t}\right)
\\
&\quad+ \frac{1}{2^\eta}t^{-\frac{3}{2}\eta}\Big\{\left(\frac{(f(x)-f(y))^2}{t}\right)^\eta\exp \left( - \eta \frac{(f(x) - f(y))^2}{2t}\right)
\\
&\quad+ |\beta|^\eta\frac{(\left|f(x)\right|+\left|f(y)\right|)^{2\eta}}{t^\eta}\exp \left( - \eta\frac{(|f(x)|+|f(y)|)^{2}}{2t}\right)\Big\} \Bigg].
\end{align*}
Thus,
\begin{align*}
\MoveEqLeft\int_{\R}\left|\partial_tG(t, z, y)\right|^\eta\,dz
\leq\,  \frac{ t^{-\frac{3}{2}\eta}  }{(2\sqrt{2\pi})^{\eta}}\left(\frac{1}{\sqrt{a_1}}+\frac{1}{\sqrt{a_2}} \right)^{\eta} 
\\*
&\quad\times
\Bigg[(1+\,|\beta|)^\eta\int_{\R}\exp \left( - \eta\frac{(f(z) - f(y))^2}{2t}\right)\,dz
  \\
  &\quad\quad+ \int_{\R}\left(\frac{(f(z)-f(y))^2}{t}\right)^\eta\exp \left( - \eta\frac{(f(z) - f(y))^2}{2t}\right)dz
  \\
&\quad\quad+ |\beta|^\eta\int_\R\left(\frac{(\left|f(z)\right|+\left|f(y)\right|)^2}{t}\right)^\eta\exp \left( -\eta \frac{(|f(z)|+|f(y)|)^2}{2t}\right) d z \Bigg].
\end{align*}

On the one hand, by (\ref{Majerror}) we have
$$\begin{array}{rcl}
\int_{\R}\exp \left( - \eta\frac{(f(z) - f(y))^2}{2t}\right)\,dz &\leq& \frac{1}{\sqrt{\eta}}\max(\sqrt{a_1},\sqrt{a_2})\sqrt{2\pi t} .
\end{array}$$
 On the other hand, we have
\begin{align*}
\MoveEqLeft
\int_{\R}\left(\frac{(f(z)-f(y))^2}{t}\right)^\eta\,\exp \left( - \eta\frac{(f(z) - f(y))^2}{2t}\right)\,dz
\\
&= \int_{0}^{+\infty}\left(\frac{(\frac{z}{\sqrt{a_2}}-f(y))^2}{t}\right)^\eta\,\exp \left( -\eta \frac{( \frac{z}{\sqrt{a_2}}- f(y))^2}{2t}\right)\,dz
\\
&\quad + \int_{-\infty}^0\left(\frac{(\frac{z}{\sqrt{a_1}}-f(y))^2}{t}\right)^\eta\,\exp \left( - \eta\frac{( \frac{z}{\sqrt{a_1}}- f(y))^2}{2t}\right)\,dz
\\
&\leq \int_\R\left(\frac{(\frac{z}{\sqrt{a_2}}-f(y))^2}{t}\right)^\eta\,\exp \left( - \eta\frac{( \frac{z}{\sqrt{a_2}}- f(y))^2}{2t}\right)\,dz
\\
&\quad+ \int_\R\left(\frac{(\frac{z}{\sqrt{a_1}}-f(y))^2}{t}\right)^\eta\,\exp \left( -\eta \frac{( \frac{z}{\sqrt{a_1}}- f(y))^2}{2t}\right)\,dz
\\
&\leq \left(\frac{2}{\eta}\right)^{\eta+\frac{1}{2}}\;\max(\sqrt{a_1},\sqrt{a_2})\,  \sqrt{t} \int_\R z^{2\eta}\,e^{-z^2}\,dz .
\end{align*}
It is clear that the last integral converges for every $\eta>0$. Therefore,  we get
\[
\int_{\R}\left(\frac{(f(z)-f(y))^2}{t}\right)^\eta\,\exp \left( - \eta\frac{(f(z) - f(y))^2}{2t}\right)\,dz \leq\;  C(\eta , a_1, a_2)\sqrt{t}.
\]

Concerning the last integral, we have
\begin{align*}
\MoveEqLeft
\int_\R \left(\frac{(\left|f(z)\right|+\left|f(y)\right|)^2}{t}\right)^\eta\exp \left( - \eta\frac{(|f(z)|+|f(y)|)^2}{2t}\right)\, dz
\\*
&= \int_{\{  z y\leq 0\}}\left(\frac{(f(z)-f(y))^2}{t}\right)^\eta\,\exp \left( - \eta \frac{(f(z) - f(y))^2}{2t}\right) \, dz
\\
&\quad+ \int_{\{  z y\geq 0\}}\left(\frac{(f(z)+f(y))^2}{t}\right)^\eta\,\exp \left( - \eta\frac{(f(z) +f(y))^2}{2t}\right) dz
\\
&\leq \int_{\R}\left(\frac{(f(z)-f(y))^2}{t}\right)^\eta\,\exp \left( - \eta\frac{(f(z) - f(y))^2}{2t}\right)\,dz
\\
&\quad+ \int_{\R}\left(\frac{(f(z)+f(y))^2}{t}\right)^\eta\,\exp \left( -\eta \frac{(f(z) +f(y))^2}{2t}\right)\,dz.
\end{align*}
By the same technique as above, we get
\begin{multline*}
\int_\R\left(\frac{(\left|f(z)\right|+\left|f(y)\right|)^2}{t}\right)^\eta\exp \left( -\eta \frac{(|f(z)|+|f(y)|)^2}{2t}\right)\, dz
\\*
 \leq\left(\frac{2}{\eta}\right)^{\eta+\frac{1}{2}}\max(\sqrt{a_1},\sqrt{a_2})\, \sqrt{\frac{\pi t}{2}}.
\end{multline*}
All this implies that
\[
 \int_{\R}\left|\partial_tG(t, z, y)\right|^\eta\,dz \leq\, C(\eta , a_1, a_2, \beta) t^{-\frac{3}{2}\eta+\frac12},
\]
where $C(\eta , a_1, a_2,\beta)$ denotes a strictly positive constant depending only on $\eta , a_1, a_2, \beta $.

Following the same steps, we can prove that
\[
 \int_{\R}\left|\partial_tG(t, x, z)\right|^\eta\,dz \leq\, C(T, \eta , a_1, a_2, \beta) t^{-\frac{3}{2}\eta+\frac12}
\]
with the same constant $C(\eta , a_1, a_2,\beta)$ as above.

$(ii)$
The mixed partial derivative of $G$ equals
\begin{align*}
\MoveEqLeft
\frac{\partial^2}{\partial t \partial s}G(t-s, z , y)
= \frac{1}{\sqrt{2\pi}} \left( \frac{ {\bf 1}_{ \{ y \le 0 \} } }{\sqrt{a_1}} + \frac{ {\bf 1}_{ \{y>0\} } }{\sqrt{a_2}} \right)
\biggl[
\frac34(t-s)^{-\frac52} \exp\left\{-\frac{(f(z)-f(y))^2}{2(t-s)}\right\}
\\
&-3(t-s)^{-\frac72} \exp\left\{-\frac{(f(z)-f(y))^2}{2(t-s)}\right\} \frac{(f(z)-f(y))^2}{2}
\\
&+(t-s)^{-\frac92} \exp\left\{-\frac{(f(z)-f(y))^2}{2(t-s)}\right\} \frac{(f(z)-f(y))^4}{4}
\\
&+\frac34 \beta \sign(y) (t-s)^{-\frac52} \exp\left\{-\frac{(\abs{f(z)}+\abs{f(y)})^2}{2(t-s)}\right\}
\\
&-3\beta \sign(y)(t-s)^{-\frac72} \exp\left\{-\frac{(\abs{f(z)}+\abs{f(y)})^2}{2(t-s)}\right\} \frac{(\abs{f(z)}+\abs{f(y)})^2}{2}
\\
&+\beta \sign(y)(t-s)^{-\frac92} \exp\left\{-\frac{(\abs{f(z)}+\abs{f(y)})^2}{2(t-s)}\right\} \frac{(\abs{f(z)}+\abs{f(y)})^4}{4}
\biggr].
\end{align*}
Therefore
\begin{align*}
\MoveEqLeft
\abs{\frac{\partial^2}{\partial t \partial s}G(t-s, z , y)}
\le C (t-s)^{-\frac52}
\biggl[
\exp\left\{-\frac{(f(z)-f(y))^2}{2(t-s)}\right\}
\\
&+ \exp\left\{-\frac{(f(z)-f(y))^2}{2(t-s)}\right\} \frac{(f(z)-f(y))^2}{2(t-s)}
\\
&+\exp\left\{-\frac{(f(z)-f(y))^2}{2(t-s)}\right\} \frac{(f(z)-f(y))^4}{4(t-s)^2}
\\
&+\exp\left\{-\frac{(\abs{f(z)}+\abs{f(y)})^2}{2(t-s)}\right\}
+\exp\left\{-\frac{(\abs{f(z)}+\abs{f(y)})^2}{2(t-s)}\right\} \frac{(\abs{f(z)}+\abs{f(y)})^2}{2(t-s)}
\\
&+\exp\left\{-\frac{(\abs{f(z)}+\abs{f(y)})^2}{2(t-s)}\right\} \frac{(\abs{f(z)}+\abs{f(y)})^4}{4(t-s)^2}
\biggr].
\end{align*}
The rest of proof can be done similarly to that of $(i)$.
\end{proof}

\begin{corollary}\label{cor4}For all  $\delta \in (\frac13,1)$
there exists a strictly positive constant $C>0,$ such that, for every $x, y \in {\mathbb R}, $  for every
 $ s, t \in (0,T]\;$ with\; $s < t,$
\[
\max \Big( \int_\R\Big|G(t, z, y)-G(s, z, y)\,\Big|dz , \int_\R\Big|G(t, x, z)-G(s, x, z)\,\Big|dz \Big)
\leq C\, s^{-\delta}\,(t-s)^\delta.
\]
\end{corollary}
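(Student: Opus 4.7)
The plan is to represent $G(t,z,y) - G(s,z,y)$ as a time integral of $\partial_r G$ over $[s,t]$, bound the resulting $L^1(\R)$-norm by means of Lemma~\ref{lem3}(i) with $\eta = 1$, and then convert the logarithmic time behaviour into the claimed power $s^{-\delta}(t-s)^\delta$ by an elementary real-variable estimate.

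More concretely, for $0 < s < t \leq T$ and $y \in \R$, I would start from the identity
\[
G(t,z,y) - G(s,z,y) = \int_s^t \partial_r G(r, z, y)\, dr,
\]
then apply the triangle inequality together with Tonelli's theorem to obtain
\[
\int_\R \abs{G(t,z,y) - G(s,z,y)}\, dz \leq \int_s^t \int_\R \abs{\partial_r G(r, z, y)}\, dz\, dr.
\]
Lemma~\ref{lem3}(i) with $\eta = 1$ yields $\int_\R \abs{\partial_r G(r,z,y)}\, dz \leq C\, r^{-1}$ (the exponent being $-\tfrac32 + \tfrac12 = -1$), so
\[
\int_\R \abs{G(t,z,y) - G(s,z,y)}\, dz \leq C \int_s^t r^{-1}\, dr = C \log(t/s).
\]

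The final step is to upgrade this logarithm to a power. Writing $\log(t/s) = \log(1 + (t-s)/s)$, the function $u \mapsto \log(1+u)/u^\delta$ is continuous on $(0, \infty)$, vanishes at $0^+$ (since $\log(1+u) = u + O(u^2)$ and $\delta < 1$) and at $+\infty$ (since any positive power beats the logarithm), and is therefore bounded by some $C_\delta < \infty$. Consequently,
\[
\log(t/s) \leq C_\delta \left(\frac{t-s}{s}\right)^\delta = C_\delta\, s^{-\delta}(t-s)^\delta,
\]
which is the required bound. The integral with $x$ fixed and $z$ running over the first slot of $G$ is handled identically, since Lemma~\ref{lem3}(i) supplies both bounds simultaneously.

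The only substantive input is Lemma~\ref{lem3}(i) at the borderline value $\eta = 1$, which delivers the marginally integrable behaviour $\int_\R \abs{\partial_r G(r,z,y)}\, dz \leq C r^{-1}$; everything else is bookkeeping. I note that the constraint $\delta > 1/3$ in the statement is not actually used by this argument---which works for any $\delta \in (0,1)$---so it presumably reflects what is needed in the subsequent mild-solution analysis of~\eqref{e:2} rather than any genuine obstruction in the structure of $G$ itself.
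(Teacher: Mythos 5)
Your argument is correct, and it takes a genuinely different route from the paper's. The paper interpolates pointwise, writing $\abs{G(t,z,y)-G(s,z,y)}=\abs{\cdot}^{1-\delta}\abs{\cdot}^{\delta}$, bounding the first factor by $Cs^{-\frac12(1-\delta)}$ via Lemma~\ref{lem1} and the second by the mean value theorem as $(t-s)^{\delta}\abs{\partial_tG(t^*,z,y)}^{\delta}$, and then invoking Lemma~\ref{lem3}$(i)$ with $\eta=\delta$; the hypothesis $\delta>\frac13$ is exactly what makes the exponent $-\frac32\delta+\frac12$ negative so that $(t^*)^{-\frac32\delta+\frac12}\le s^{-\frac32\delta+\frac12}$, and the two powers of $s$ combine to $s^{-\delta}$. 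You instead use the exact representation $G(t,\cdot)-G(s,\cdot)=\int_s^t\partial_rG(r,\cdot)\,dr$, apply Lemma~\ref{lem3}$(i)$ only at the single exponent $\eta=1$ to get the marginal bound $Cr^{-1}$, and absorb the resulting logarithm via $\log(1+u)\le C_\delta u^{\delta}$. Both proofs rest on the same key lemma, but yours buys two things: it is valid for every $\delta\in(0,1)$ rather than only $\delta>\frac13$ (your remark that the restriction is not structural is accurate; it is the paper's own mechanism, and the later choices such as $\delta\in(\max\{2\sigma,\frac13\},1)$, that require it), and it sidesteps the mild awkwardness in the paper's argument that the mean-value point $t^*$ depends on $z$ while Lemma~\ref{lem3}$(i)$ is stated for a fixed time argument. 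The paper's approach, on the other hand, generalizes more directly to the second-difference estimate of Lemma~\ref{lem6}, where the same interpolation-plus-mean-value scheme is reused with the mixed derivative.
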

\begin{proof}
For every fixed $z, y\in\R$, $0< s < t \le T$ and $\delta\in[0,1]$  we have, by the triangular inequality,
\begin{align*}
\big|G(t, z, y)-G(s, z, y)\,\big|
&= \big|G(t, z, y)-G(s, z, y)\,\big|^{1-\delta} \;\big|G(t, z, y)-G(s, z, y)\,\big|^\delta
\\
&\leq \left(\big|G(t, z, y)\big|+ \big|G(s, z, y)\,\big|\right)^{1-\delta} \;\big|G(t, z, y)-G(s, z, y)\,\big|^\delta.
\end{align*} 
By mean-value theorem, there exists $t^*\in(s,t)$ such that
\[
\big|G(t, z , y)-G(s, z, y)\,\big|^\delta\leq\,(t-s)^\delta \; |\partial_tG(t^*,z, y) |^\delta.
\]
Therefore, by Lemma  \ref{lem1}  and since for every $x\in\R,\;e^{-x^2}\leq1,$ we have
 \[
\big|G(t, z, y)-G(s, z, y)\,\big|
\leq\,C\, \left(t^{-\frac{1}{2}}+ s^{-\frac{1}{2}}\right)^{1-\delta} \,
(t-s)^\delta \; |\partial_tG(t^*,z , y) |^\delta.
\]
Since for every $s < t,$ we have $t^{-1/2} < s^{-1/2},$ by Lemma  \ref{lem3},  we get:
\begin{align*}
\int_\R \big|G(t, z, y)-G(s, z, y)\,\big|\,dz
&\leq C s^{-\frac{1}{2}(1-\delta)}(t-s)^\delta \; \int_\R|\partial_tG(t^*, z,y) |^\delta\,dz
\\
&\leq C s^{-\frac{1}{2}(1-\delta)}(t-s)^\delta \;(t^*)^{-\frac{3}{2}\,\delta+\frac12}.
\end{align*}
Note that for $\delta>\frac13$
\[(t^*)^{-\frac{3}{2}\,\delta+\frac12} \le s^{-\frac{3}{2}\,\delta+\frac12}.\]
Consequently, we obtain:
\[
\int_\R \big|G(t, z, y)-G(s, z , y)\,\big|\,dz
\leq  C  s^{-\frac{1}{2}(1-\delta)}\, s^{-\frac{3}{2}\,\delta+\frac12}(t-s)^\delta 
\leq C s^{-\delta}\,(t-s)^\delta .
\]

Following the same technique we also get
\[
\int_\R \big|G(t, x, z)-G(s, x , z)\,\big|\,dz \le
 C\, s^{-\delta}\,(t-s)^\delta .
 \qedhere
 \]
\end{proof}

\begin{lemma}\label{lem6} For every  $\delta \in (\frac15,1)$  there exists a strictly positive constant $C$ such that, for all $0< \sigma <\tau <s <t <T,
$
\begin{multline}\label{eq:lem6}
 \int_\R \left|G(t- \tau, z, y)-G(s-\tau,z, y)-G(t- \sigma, z , y)+G(s- \sigma, z, y)\right|\,dz\\
 \leq\, C\,(t-s)^{\delta}\,(s-\tau)^{-2\delta}\,(\tau-\sigma)^{\delta}.
\end{multline}
\end{lemma}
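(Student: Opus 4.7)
The plan is to mimic the interpolation used in the proof of Corollary \ref{cor4}, but this time for the \emph{rectangular} second-order increment in the time variables. Denoting
$$\Delta(z) := G(t-\tau, z, y) - G(s-\tau, z, y) - G(t-\sigma, z, y) + G(s-\sigma, z, y),$$
I would split $|\Delta(z)| = |\Delta(z)|^{1-\delta}\cdot|\Delta(z)|^\delta$ and control each factor by a different bound: the $(1-\delta)$-power by a crude pointwise estimate coming from Lemma \ref{lem1}, and the $\delta$-power by the mixed second-derivative estimate of Lemma \ref{lem3}(ii).

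For the crude factor I would use the triangle inequality together with Lemma \ref{lem1}, discarding the Gaussian exponential via $e^{-x^2}\le 1$ exactly as in the proof of Corollary \ref{cor4}. Since $s-\tau$ is the smallest of the four time gaps $t-\tau$, $s-\tau$, $t-\sigma$, $s-\sigma$, this yields $|\Delta(z)|\le C(s-\tau)^{-1/2}$, whence $|\Delta(z)|^{1-\delta}\le C(s-\tau)^{-(1-\delta)/2}$.

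For the refined factor I would exploit that $\Delta(z)$ is a second-order rectangular difference of the map $(a,b)\mapsto G(a-b,z,y)$. Writing it as an iterated integral of the mixed partial derivative,
$$\Delta(z) = -\int_s^t\!\!\int_\sigma^\tau \frac{\partial^2 G}{\partial t\,\partial s}(u-v,z,y)\,dv\,du,$$
and invoking the mean-value theorem for double integrals of continuous functions, one obtains, for each fixed $z,y$, points $u^*=u^*(z,y)\in(s,t)$ and $v^*=v^*(z,y)\in(\sigma,\tau)$ with $|\Delta(z)| = (t-s)(\tau-\sigma)\,\bigl|\tfrac{\partial^2 G}{\partial t\,\partial s}(u^*-v^*,z,y)\bigr|$. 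Multiplying the two factors, integrating over $z\in\R$, and applying Lemma \ref{lem3}(ii) with $\eta=\delta$, one arrives at
$$\int_\R|\Delta(z)|\,dz \le C(s-\tau)^{-(1-\delta)/2}\bigl[(t-s)(\tau-\sigma)\bigr]^\delta(u^*-v^*)^{-\frac52\delta+\frac12}.$$
Because $\delta>1/5$ makes the exponent $-\tfrac52\delta+\tfrac12$ strictly negative and $u^*-v^*\ge s-\tau$, the last factor is bounded by $C(s-\tau)^{-\frac52\delta+\frac12}$. Collecting exponents, $-\tfrac{1-\delta}{2}-\tfrac52\delta+\tfrac12=-2\delta$, which produces the target bound $C(t-s)^\delta(\tau-\sigma)^\delta(s-\tau)^{-2\delta}$.

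The main technical subtlety, already present in the proof of Corollary \ref{cor4}, is that $u^*$ and $v^*$ depend on $z,y$, so Lemma \ref{lem3}(ii) cannot be invoked verbatim at a $z$-varying argument. The resolution is the same: the explicit Gaussian-type pointwise bound for $\bigl|\tfrac{\partial^2 G}{\partial t\,\partial s}\bigr|$ derived in the proof of Lemma \ref{lem3}(ii) is monotone in its time argument on $[s-\tau,t-\sigma]$, so substituting the worst-case value $w=s-\tau$ is admissible under the $z$-integral once $\delta>1/5$ renders the relevant power negative. The hypothesis $\delta>1/5$ in the statement encodes precisely this requirement.
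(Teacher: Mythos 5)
Your proposal follows essentially the same route as the paper's own proof: the interpolation $|\Delta|=|\Delta|^{1-\delta}|\Delta|^{\delta}$, the crude bound $C(s-\tau)^{-1/2}$ from Lemma~\ref{lem1}, the mean-value representation via the mixed derivative, Lemma~\ref{lem3}(ii) with $\eta=\delta$, and the monotonicity argument for $\delta>\frac15$ yielding the exponent $-2\delta$. In fact you are slightly more careful than the paper, which applies Lemma~\ref{lem3}(ii) at the intermediate point $\theta^*-\rho^*$ without acknowledging its dependence on $z$; your remark that one must instead dominate the pointwise derivative bound by its worst-case value $s-\tau$ before integrating is the correct way to make that step rigorous.
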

\begin{proof}
On the one hand, by Lemma~\ref{lem1},
\begin{align*}
\MoveEqLeft
\left|G(t- \tau, z, y)-G(s-\tau,z, y)-G(t- \sigma, z , y)+G(s- \sigma, z, y)\right|
\\
&\le C \left( (t- \tau)^{-\frac12} + (s- \tau)^{-\frac12} + (t- \sigma)^{-\frac12} +(s- \sigma)^{-\frac12} \right)
\le C (s- \tau)^{-\frac12}.
\end{align*}
On the other hand, by the mean value theorem, there exist
$\theta^*\in(s,t)$ and $\rho^*\in(\sigma,\tau)$
such that
\begin{multline*}
\abs{G(t- \tau, z, y)-G(s-\tau,z, y)-G(t- \sigma, z , y)+G(s- \sigma, z, y)}
\\*
= \abs{\int_s^t\!\!\int_\sigma^\tau \frac{\partial^2}{\partial\theta \partial \rho} G(\theta-\rho,z,y)\,d\rho\,d\theta}
= \abs{\frac{\partial^2}{\partial\theta \partial \rho} G(\theta^*-\rho^*,z,y)} (t-s) (\tau-\sigma).
\end{multline*}
Hence, for every $\delta\in(0,1),$ we have
\begin{align*}
\MoveEqLeft
\int_\R \left|G(t- \tau, z, y)-G(s-\tau,z , y)-G(t- \sigma, z, y)+G(s- \sigma, z , y)\right|\,dz
\\
&=\int_\R \left|G(t- \tau, z , y)-G(s-\tau, z , y)-G(t- \sigma, z, y)+ G(s- \sigma, z , y)\right|^{1-\delta}
\\
&\quad\times \left|G(t- \tau, z , y)-G(s-\tau, z , y) -G(t- \sigma, z , y)+ G(s- \sigma, z , y)\right|^{\delta}\,dz
\\
&\le C(s-\tau)^{-\frac12(1-\delta)}(t-s)^\delta (\tau-\sigma)^\delta
\int_\R \abs{\frac{\partial^2}{\partial\theta \partial \rho} G(\theta^*-\rho^*,z,y)}^\delta \,dz.
\end{align*}
Taking into account Lemma~\ref{lem3} $(ii)$, we obtain
\begin{align*}
\MoveEqLeft
\int_\R \left|G(t- \tau, z, y)-G(s-\tau,z , y)-G(t- \sigma, z, y)+G(s- \sigma, z , y)\right|\,dz
\\
&\le C(s-\tau)^{-\frac12(1-\delta)}(t-s)^\delta (\tau-\sigma)^\delta
(\theta^*-\rho^*)^{-\frac52\delta+\frac12}.
\end{align*}
If $\delta>\frac15$, then
$(\theta^*-\rho^*)^{-\frac52\delta+\frac12} \le (s-\tau)^{-\frac52\delta+\frac12}$
and we arrive at \eqref{eq:lem6}.
\end{proof}

\section{Mild solution}
\subsection{Definitions and notation}
 \subsubsection{Norms and spaces}
Throughout the paper, the symbol $C$ will denote a generic constant,
the precise value of which is not important and may vary between different
equations and inequalities.
Let $\norm{\,\cdot\,}_2$  be the norm in $L^2(\R)$.
Let $0< \sigma < 1$. For every measurable function
$u\colon [0,T]\times\R\to \R$ and $t\in[0,T]$ denote
\begin{gather}
\norm{u}_{\sigma,1,t}^2:=   \int_0^t\left(\int_0^s\frac{\left\|u(s,\cdot)-u(v,\cdot)\right\|_2  }{(s-v)^{\sigma+1}}dv\right)^2 ds,
\notag\\
\label{lem7}
\left\| u \right\|^2_{\sigma,2,t}:=\sup_{s\in[0,t]}\left\| u(s,\cdot) \right\|_2^2+\norm{u}_{\sigma,1,t}^2.
\end{gather}
We define also the following seminorm for $f\colon[0,T]\to \R$ and $t\in[0,T]$: 
\[
\left\| f\right\|_{\sigma,0,t}:= \sup_{0 \leq u<v\leq t}\left( \frac{\left|f(u)-f(v)\right|}{(v-u)^{1-\sigma}}+\int_u^v \frac{\left|f(u)-f(z)\right|}{(z-u)^{2-\sigma}}\,dz      \right)
\]
For $\sigma \in(0,1)$ we denote by $ \mathcal{B}^{\sigma,2}\left([0,T];L^2(\R) \right)$ the following Banach space:
\begin{multline*}
\mathcal{B}^{\sigma,2}\left([0,T];L^2(\R) \right)
\coloneqq\Bigl\{
   u\colon[0,T]\times\R\to \R\text{ Lebesgue measurable mapping}\\ \text{such that} \; \left\| u \right\|^2_{\sigma,2,T}< \infty\Bigr\}.
\end{multline*}

\subsubsection{Integration with respect to $B^H$ for every $H\in (\frac{1}{2},1)$ }

Let $a,b\in \R,a<b.$ Let $\varphi\in L^1([a,b])$ and $\sigma \in(0,1).$ The Riemann--Liouville left- and right-sided fractional integrals of $\varphi$ of order $\sigma$ are defined for almost all $x\in[a,b]$ by
\begin{align*}
I_{a^+}^\sigma\varphi(x)&=  \frac{1}{\Gamma(\sigma)} \int_a^x (x-y)^{\sigma-1}\,\varphi(y)\,dy
\\
I_{b^-}^\sigma\varphi(x)&= \frac{1}{\Gamma(\sigma)} \int^b_x (x-y)^{\sigma-1}\,\varphi(y)\,dy.
\end{align*}

Let $I_{a^+}^\sigma(L^p) (resp.\;\;I_{b^-}^\sigma(L^p) ) $ denote the image of $L^p([a,b])$ by the operator $I_{a^+}^\sigma\, (resp.\;\,I_{b^-}^\sigma ).$ \\ If $\varphi \in I_{a^+}^\sigma(L^p) (resp.\;\;\varphi \in I_{b^-}^\sigma(L^p) )$ then the Riemann--Liouville left- and right-sided fractional derivatives are defined by
\begin{align*}
D^\sigma_{a^+}\varphi(x)&=\frac{1}{\Gamma(1-\sigma)}\left( \frac{\varphi(x)}{(x-a)^\sigma}+\sigma\int_a^x \frac{\varphi(x)-\varphi(y)}{(x-y)^{1+\sigma}}\,dy\right),
\\
D^{\sigma}_{b^-}\varphi(x)&=\frac{1}{\Gamma(1-\sigma)}\left( \frac{\varphi(x)}{(b-x)^{\sigma}}+\sigma\int_x^b \frac{\varphi(x)-\varphi(y)}{(y-x)^{1+\sigma}}\,dy\right).
\end{align*}

 For every two functions $\varphi,\psi\colon[a,b]\rightarrow \R$  such that $D^\sigma_{a^+}\varphi \in L^1([a,b])$ and
$D^{1-\sigma}_{b^-}\psi_{b^-} \in L^\infty([a,b]) $ where $\psi_{b^-}(x)=\psi(b^-)-\psi(x),$ we define the generalized Lebesgue--Stieljes integral by
\begin{equation}\label{stie}
\int^b_a\varphi(x)\,d\psi(x):= \int^b_aD^\sigma_{a^+}\varphi(x)\,D^{1-\sigma}_{b^-}\psi_{b^-}(x)\,dx.
\end{equation}
Furthermore, this integral admits the following bound:
\begin{equation}\label{majstie}
\left|\int^b_a\varphi(x)\,d\psi(x)\right|\leq\,C_{\sigma}\, \left\|\psi\right\|_{\sigma,0,b}
\,\int_a^b\left( \frac{\left|\varphi(x)\right|}{(x-a)^{\sigma}}+\int_a^x \frac{\left|\varphi(x)-\varphi(y)\right|}{(x-y)^{1+\sigma}}\,dy\right)dx.
\end{equation}

For more information on generalized Lebesgue--Stieljes integration with respect to fractional Brownian motion with Hurst index $H > 1/2$, the reader can see, e.\,g., \cite{Mishura} and references therein.

Now we suppose that  $\sigma \in (1-H,1/2)$, and denote by $\mathcal{L}\left(L^2({\mathbb R}) \right)$ the space of linear operators on $L^2({\mathbb R})$. Let $F\colon[0,T]\times\Omega \to \mathcal{L}\left(L^2({\mathbb R}) \right)$  be an operator function such that
\begin{equation}\label{eq:cond-int}
\sup_{j\in \mathbb{N}}\int_0^T \left(\frac{\left\|F(s,\omega)e_j \right\|_2}{s^\sigma}+\int_0^t \frac{\left\|(F(s,\omega)-F(v,\omega))e_j\right\|_2}{(s-v)^{\sigma+1}} \,dv\right)\,ds <\infty \qquad\text{a.\,s.}
\end{equation}
Following \cite{NU},  we define the integral with respect to  the $L^2({\mathbb R})$-valued fractional Brownian motion by
\begin{equation}\label{integ}
\int_a^b F(s,\omega)\,dB^H(s)=\sum_{j=1}^{\infty} \lambda_j
\int_a^b F(s,\omega)e_j\,dB_j^H(s),
\end{equation}
where the integral with respect to $B^H_j$ is the path-wise generalized Lebesgue--Stieltjes integral defined in (\ref{stie}) and the convergence of the series should be understood as $\mathbb{P}$-a.\,s.\ convergence in $L^2({\mathbb R})$. 

Applying assumptions \eqref{ej} and \cite[Proposition 2.1]{NU}, we deduce that the integral \eqref{integ} is well defined.

From (\ref{majstie}), one can obtain the following inequality
for $0\leq a<b\leq T$:
\begin{multline}\label{ineg}
\left\|\int_a^b F(s,\omega)\,dB^H(s)\right\|_2\;
\leq C \xi_{\sigma,H,T}\sup_{i\in \mathbb{N}}\int_a^b\biggl( \frac{\left\|F(s,\omega)e_i\right\|_2}{(s-a)^{\sigma}}
\\*
+\int^s_a \frac{\left\|F(s,\omega)e_i-F(v,\omega)e_i\right\|_2}{(s-v)^{1+\sigma}}\,dv\biggr)ds,
\end{multline}
where $C=C(\sigma)$ is a constant, and the random variable 
\[
\xi_{\sigma,H,T}
\coloneqq \sum_{j=1}^\infty\lambda_j\left\|B^H_j\right\|_{\sigma,0,T}
\]
is finite a.\,s., see \cite{NU}.
 For more details on the integration with respect to the $L^2({\mathbb R})$-valued fractional Brownian motion,  see e.\,g.\  \cite{NU} and references therein.

\subsubsection{Mild solution}
\begin{definition}
\label{MildDef}
 An $L^2(\R)$-valued random process $\{u(t,\cdot),t\in[0,T]\}$ is called a mild solution to the problem \eqref{e:2}  if it satisfies the following assumptions:
 \begin{enumerate}
 \item $u \in  \mathcal{B}^{\sigma,2}\left([0,T];L^2(\R) \right)$ a.s. for some $\sigma \in (1-H, 1/2)$.
 \item For any $\forall t \in [0,T]$ it holds that
\begin{equation}\label{sm}
u(t,x) =  \sum_{j=1}^\infty\lambda_j\int_0^t\!\!\int_\R G(t-s, x , y)\,h(u(s,y))\;e_j(y)\,dy\,dB_j^H(s) \qquad a.s.
\end{equation}
\end{enumerate}
Here, the integrals with  respect to $B_j^H, j \in {\mathbb N}$ are the generalized Lebesgue--Stieltjes integrals defined in (\ref{stie}).
\end{definition}

Let us introduce some notations for $u \in  \mathcal{B}^{\sigma,2}\left([0,T];L^2(\R) \right)$. Namely, let
\begin{gather}
\varsigma_{j,t}(u)(s,x)\coloneqq \int_\R G(t-s, x, y)\,h(u(s,y))\;e_j(y)\,dy,
\label{eq:zeta}
\intertext{and}
\varsigma^*_{j,t,s}(u)(v,x)\coloneqq\varsigma_{j,t}(u)(v,x)-\varsigma_{j,s}(u)(v,x).
\label{eq:zeta*}
\end{gather}
Also let us denote the right-hand side of \eqref{sm} by
\begin{align*}
(\A u)(t,x) &\coloneqq  \sum_{j=1}^\infty\lambda_j\int_0^t\int_\R G(t-s, x , y)\,h(u(s,y))\;e_j(y)\,dy\,dB_j^H(s)
\\
&=\sum_{j=1}^\infty\lambda_j\int_0^t\varsigma_{j,t}(u)(s,x)\,dB_j^H(s).
\end{align*}

Note that  $\A u$ is an integral with respect to an $L^2(\R)$-valued fractional Brownian motion, defined by \eqref{integ}.
The condition \eqref{eq:cond-int} for it has the form
\[
\sup_{j\in \mathbb{N}} \int_0^T \left(\frac{ \left\|\varsigma_{j,t}(u)(s,\cdot)\right\|_2 }{s^\sigma} + \int_0^s  \frac{    \left\| \varsigma_{j,t}(u)(s,\cdot)- \varsigma_{j,t}(u)(v,\cdot)\right\|_2}{(s-v)^{\sigma+1}}\,dv\right)ds
<\infty \qquad \text{a.\,s.}
\]
In fact, this condition holds for any $u \in  \mathcal{B}^{\sigma,2}\left([0,T];L^2(\R) \right)$.
It will be checked in the proof of Proposition~\ref{prop11} below,
in which we  also establish that  $\A u$ actually determines
an $L^2(\R)$-valued stochastic process from the class $\mathcal{B}^{\sigma,2}\left([0,T];L^2(\R) \right)$.

\subsection{A priori estimates}
Let us fix $H\in \left(\frac{1}{2},1\right)$ and $\sigma \in \left(1-H,\frac{1}{2}\right)$. Abbreviate $\xi = \xi_{\sigma,H,T}$.
\begin{proposition}\label{prop11}
Let $u \in  \mathcal{B}^{\sigma,2}\left([0,T];L^2(\R) \right)$.
Then $\A u \in  \mathcal{B}^{\sigma,2}\left([0,T];L^2(\R) \right)$ a.\,s.
Moreover, for any $t\in(0,T]$,
\[
\norm{\A u}_{\sigma,2,t}^2 \le C \xi^2 
\left(\norm{u}_{\sigma,2,t}^2+1\right).
\]
\end{proposition}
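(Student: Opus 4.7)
The plan is to apply the fundamental bound~\eqref{ineg} twice: once to estimate $\norm{(\mathcal{A}u)(t,\cdot)}_2$ (controlling the supremum part of $\norm{\cdot}_{\sigma,2,t}$) and once for the time-increment $(\mathcal{A}u)(t,\cdot) - (\mathcal{A}u)(s,\cdot)$ (controlling the seminorm $\norm{\cdot}_{\sigma,1,t}$). For this I first need $L^2(\R)$ bounds on $\varsigma_{j,t}(u)(s,\cdot)$ and on its various differences in $s$, $v$, $t$. The key technique is a Cauchy--Schwarz splitting of the form $\bigl|\!\int G(\cdot,y)\,h(u)\,e_j(y)\,dy\bigr|^2 \le \bigl(\!\int|G|\,dy\bigr)\bigl(\!\int |G|\,h^2\,e_j^2\,dy\bigr)$ followed by Fubini; combined with the two-directional bounds of Corollary~\ref{cor2} and the assumptions $\sup_j\norm{e_j}_\infty <\infty$ and $\norm{e_j}_2=1$, this transfers pointwise $L^1$-estimates of $G$ to $L^2(\R)$-estimates of $\varsigma$ without the constant term $h_2$ producing the divergent integral $\int_\R h_2^2\,dy$.

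Concretely, in the first step I would prove, for some $\delta\in(\tfrac13,\tfrac12)$ with $\delta>\sigma$,
\begin{align*}
\norm{\varsigma_{j,t}(u)(s,\cdot)}_2 &\le C\bigl(\norm{u(s,\cdot)}_2 + 1\bigr),\\
\norm{\varsigma_{j,t}(u)(s,\cdot) - \varsigma_{j,t}(u)(v,\cdot)}_2 &\le C\norm{u(s,\cdot) - u(v,\cdot)}_2 + C(t-s)^{-\delta}(s-v)^\delta\bigl(\norm{u(v,\cdot)}_2 + 1\bigr),
\end{align*}
via Corollaries~\ref{cor2} and~\ref{cor4} (the increment is decomposed into a piece from the time-difference of $u$ and a piece from the time-difference of $G$). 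Substituting these into~\eqref{ineg} and using Cauchy--Schwarz in the outer $ds$-integral plus Beta-function identities to bound the inner integral $\int_0^s(s-v)^{\delta-\sigma-1}\,dv$ yields $\norm{(\mathcal{A}u)(t,\cdot)}_2 \le C\xi\bigl(1 + \norm{u}_{\sigma,2,T}\bigr)$. This simultaneously checks the a.s.\ finiteness condition~\eqref{eq:cond-int}.

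The main work is then the seminorm part. I would decompose
\[
(\mathcal{A}u)(t,x) - (\mathcal{A}u)(s,x) = \sum_j\lambda_j\int_s^t\varsigma_{j,t}(u)(r,x)\,dB_j^H(r) + \sum_j\lambda_j\int_0^s\varsigma^*_{j,t,s}(u)(r,x)\,dB_j^H(r),
\]
and apply~\eqref{ineg} to each piece. The first integral is handled directly by the bounds of the previous step. For the second, I would establish $\norm{\varsigma^*_{j,t,s}(u)(r,\cdot)}_2 \le C(s-r)^{-\delta}(t-s)^\delta\bigl(1+\norm{u(r,\cdot)}_2\bigr)$ using Corollary~\ref{cor4} again, and for its $r$-increment I would split into a double-$G$-difference $G(t-r,\cdot,\cdot)-G(s-r,\cdot,\cdot)-G(t-r',\cdot,\cdot)+G(s-r',\cdot,\cdot)$ applied to $h(u(r,\cdot))$ (bounded by Lemma~\ref{lem6}) and a single-$G$-difference applied to $h(u(r,\cdot))-h(u(r',\cdot))$ (bounded by Corollary~\ref{cor4}), each followed by the same Cauchy--Schwarz trick.

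Finally, plugging all pointwise estimates into the nested integrals defining $\norm{\mathcal{A}u}_{\sigma,1,t}^2$ and invoking Fubini, Cauchy--Schwarz and Beta-function identities, I would collect everything into the target $C\xi^2(\norm{u}_{\sigma,2,t}^2+1)$. The main obstacle is the delicate balance of exponents: the parameter $\delta$ must simultaneously satisfy $\delta>\tfrac13$ (Corollary~\ref{cor4}), $\delta>\tfrac15$ (Lemma~\ref{lem6}), $\delta>\sigma$ (so $(s-v)^{\delta-\sigma-1}$ is integrable) and $\delta<\tfrac12$ (so singular factors like $(s-r)^{-2\delta}$ remain integrable when paired with $(r-w)^{-(\sigma+1)}$ through the double integral). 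Since $\sigma<\tfrac12$, a valid $\delta$ exists; the careful bookkeeping of these exponents across all four nested integrations is the heart of the proof.
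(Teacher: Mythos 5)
Your proposal is correct and follows essentially the same route as the paper: the same Cauchy--Schwarz/Fubini reduction of $\varsigma_{j,t}$-norms to the $L^1$-bounds of Corollaries~\ref{cor2} and~\ref{cor4} and Lemma~\ref{lem6} (these are exactly the paper's Lemmas~\ref{lem8} and~\ref{lem9}), the same splitting of $(\A u)(t,\cdot)-(\A u)(s,\cdot)$ into $\int_s^t\varsigma_{j,t}\,dB_j^H$ plus $\int_0^s\varsigma^*_{j,t,s}\,dB_j^H$, and the same Beta-function bookkeeping. The only cosmetic difference is that you apply the $G$-difference estimates to both Cauchy--Schwarz factors (yielding exponents $\delta$ in place of the paper's $\delta/2$ and the constraint $\delta<\tfrac12$ in place of $\delta>2\sigma$), which is just a reparametrization.
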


\begin{proof} 
Let $\delta\in(\max\{2\sigma,\frac13\},1)$ be fixed throughout the proof.
It follows from $(\ref{sm})$ and  Lemma \ref{lem8} that for any $t\in(0,T]$, 
\begin{align*}
\left\|(\A u)(t,\cdot)\right\|_2&= \left\| \sum_{j=1}^\infty \lambda_j\int_0^t \varsigma_{j,t}(u)(s,\cdot)dB_j^H(s)\right\|_2
\\
&\leq C\,\xi \sup_{j\in \mathbb{N}}\int_0^t\! \left(\frac{ \left\|\varsigma_{j,t}(u)(s,\cdot)\right\|_2 }{s^\sigma} + \!\int_0^s  \frac{    \norm{\varsigma_{j,t}(u)(s,\cdot)- \varsigma_{j,t}(u)(v,\cdot)}_2}{(s-v)^{\sigma+1}}dv\right)ds
\\ 
&\leq C\,\xi \Bigg(\int_0^t \frac{\left\|u(s,\cdot)\right\|_2+1 }{s^\sigma}\,ds+ \int_0^t\!\!\int_0^s
\frac{\left\|u(s,\cdot)-u(v,\cdot)\right\|_2  }{(s-v)^{\sigma+1}}\,dv\,ds
\\
&\quad + \int_0^t(t-s)^{-\frac{\delta}{2}}\int_0^s (s-v)^{\frac{\delta}{2}-\sigma-1} \left( \left\|u(v,\cdot)\right\|_2+1 \right)dv\,ds\Bigg).
\end{align*}
By the Cauchy--Schwarz inequality, we have
\[
\int_0^t \frac{\left\|u(s,\cdot)\right\|_2+1}{s^\sigma}\,ds \leq\,\left( \int_0^t \frac{1}{s^{2\sigma}}\,ds \right)^{1/2}\; \left(\int_0^t \left (\left\|u(s,\cdot)\right\|_2+1\right )^2 \;ds\right)^{1/2}.
\]
Since $\sigma < 1/2,$ we have $\int_0^t \frac{1}{s^{2\sigma}}\,ds < \infty$.
 Hence,
\begin{equation}\label{majo1}
\int_0^t \frac{\left\|u(s,\cdot)\right\|_2+1}{s^\sigma}\,ds
\leq\,C\left(\int_0^t \left (\left\|u(s,\cdot)\right\|_2^2+1\right ) \;ds\right)^{1/2}.
\end{equation}

By Fubini's Theorem and applying the change of variables $x=\frac{s-v}{t-v}$ in the last integral, we obtain
\begin{equation}\label{beta}
\begin{split}
\MoveEqLeft \int_0^t(t-s)^{-\frac{\delta}{2}}\int_0^s (s-v)^{\frac{\delta}{2}-\sigma-1}\, \left(\left\|u(v,\cdot)\right\|_2+1\right ) dv\,ds
\\
&= \int_0^t\left(\int_v^t(t-s)^{-\frac{\delta}{2}} 
(s-v)^{\frac{\delta}{2}-\sigma-1}ds\right) \left(\left\|u(v,\cdot)\right\|_2+1\right )\,dv
\\ 
&= \int_0^t(t-v)^{-\sigma}\left(\int_0^1(1-x)^{-\frac{\delta}{2}}x^{\frac{\delta}{2}-\sigma-1}\,dx \right) \left(\left\|u(v,\cdot)\right\|_2+1\right ) dv
\\
&= B\left(\tfrac{\delta}{2}-\sigma,1 -\tfrac{\delta}{2}\right)\,\int_0^t(t-v)^{-\sigma}\left(\left\|u(v,\cdot)\right\|_2+1\right ) dv
\\
&\leq C \left(\sup_{v \in [0,t]} \left\|u(v,\cdot)\right\|_2+1\right ) \int_0^t (t-v)^{-\sigma} dv
\\
&\le C t^{1 - \sigma }\left(\sup_{v \in [0,t]} \left\|u(v,\cdot)\right\|_2+1\right ), 
\end{split}
\end{equation}
where $B$ denotes the beta function defined, for every $p,q >0$, by $  B(p,q):= \int_0^1 t^{p-1}\, (1-t)^{q-1}\,dt$, and in the last inequality we used the fact that $\sigma < 1/2$.

Therefore, using $(\ref{majo1})$ and $(\ref{beta})$, we obtain  
\begin{align*}
\left\|(\A u)(t,\cdot)\right\|_2^2 &\leq\,C\,\xi^2\,\Bigg[ t^{1 - \sigma } \left(\sup_{v \in [0,t]} \left\|u(v,\cdot)\right\|_2+1\right )  + \left(\int_0^t \left (\left\|u(s,\cdot)\right\|_2^2+1\right ) \;ds\right)^{1/2}
\\*
&\quad+\int_0^t\int_0^s
\frac{\left\|u(s,\cdot)-u(v,\cdot)\right\|_2  }{(s-v)^{\sigma+1}}\,dv\,ds\Bigg]^2 \\
&\le  C\,\xi^2\, \Bigg(  t^{2 - 2\sigma } \left(\sup_{v \in [0,t]} \left\|u(v,\cdot)\right\|_2^2+1\right ) +  \int_0^t \left (\left\|u(s,\cdot)\right\|_2^2+1\right )\, ds
\\*
&\quad+\left( \int_0^t\int_0^s\frac{\left\|u(s,\cdot)-u(v,\cdot)\right\|_2  }{(s-v)^{\sigma+1}}\,dv\,ds\right)^2 \Bigg)
\\
&\leq C\,\xi^2\, \Bigg( (t^{2 - 2\sigma }+t) 
\left (\sup_{v \in [0,t]} \left\| u(v, \cdot ) \right\|_2^2+1\right)
\\
&\quad+ t \int_0^t \left( \int_0^s\frac{\left\|u(s,\cdot)-u(v,\cdot)\right\|_2  }{(s-v)^{\sigma+1}}\,dv \right)^2 \,ds \Bigg) .
\end{align*}
Since $t^{2 - 2\sigma }=t^{1 - 2\sigma }t \le T^{1 - 2\sigma }t = Ct$ for $\sigma<1/2$, we arrive at
\begin{equation}
\label{Ih2}
\left\|(\A u)(t,\cdot)\right\|_2^2
\leq C\,\xi^2\, t \Bigg( 
\sup_{v \in [0,t]} \left\| u(v, \cdot ) \right\|_2^2+1
+ \int_0^t \left( \int_0^s\frac{\left\|u(s,\cdot)-u(v,\cdot)\right\|_2  }{(s-v)^{\sigma+1}}\,dv \right)^2 \,ds \Bigg) .
\end{equation}
By the definition of the norm $\left\|.\right\|_{\sigma,2,t}$, we get 
\[
\norm{(\A u)(t,\cdot)}_2^2
\leq C\,\xi^2\, t \left(\norm{u}_{\sigma,2,t}^2+1\right).
\]
Consequently,
\begin{equation}\label{eq:Au-1}
\sup_{s\in[0,t]}\norm{(\A u)(s,\cdot)}_2^2
\leq C\,\xi^2\,\left(\norm{u}_{\sigma,2,t}^2+1\right),
\end{equation}
because $\left\| u\right\|^2_{\sigma,2,s}\leq \left\| u\right\|^2_{\sigma,2,t}$ for $s\leq t$.

\medskip

Futhermore, 
\begin{align}
\MoveEqLeft[1.5]\left\|(\A u)(s,\cdot) - (\A u)(v,\cdot)\right\|_2
\notag\\*
&=\left\| \sum_{j=1}^\infty \lambda_j\left\{\int_v^s \varsigma_{j,s}(u)(z,\cdot) dB_j^H(z)+\int_0^v \left[\varsigma_{j,s}(u)(z,\cdot)-
\varsigma_{j,v}(u)(z,\cdot)\right]    dB_j^H(z)\right\}\right\|_2
\notag\\
&\leq \left\| \sum_{j=1}^\infty \lambda_j\int_v^s \varsigma_{j,s}(u)(z,\cdot) dB_j^H(z)\right\|_2
\notag\\*
&\quad+\left\|\sum_{j=1}^\infty \lambda_j\int_0^v \left[\varsigma_{j,s}(u)(z,\cdot)-
\varsigma_{j,v}(u)(z,\cdot)\right]    dB_j^H(z)\right\|_2.
\label{eq:A-A}
\end{align}
For the first term in the right-hand side of \eqref{eq:A-A} we have that
\begin{multline*}
\left\| \sum_{j=1}^\infty \lambda_j\int_v^s \varsigma_{j,s}(u)(z,\cdot) dB_j^H(z)\right\|_2
\\*
\leq C\,\xi\sup_{j\in \mathbb{N}}\int_v^s \left(\frac{ \left\|\varsigma_{j,s}(u)(z,\cdot)\right\|_2 }{(z-v)^\sigma} + \int_v^z  \frac{    \left\| \varsigma_{j,s}(u)(z,\cdot)- \varsigma_{j,s}(u)(r,\cdot)\right\|_2}{(z-r)^{\sigma+1}}\,dr\right)dz.
\end{multline*}
Then, using Lemma $\ref{lem8}$ and the same technique as used to prove (\ref{Ih2}), we get 
\begin{multline}\label{eq1}
 \left\| \sum_{j=1}^\infty \lambda_j\int_v^s \varsigma_{j,s}(u)(z,\cdot)\, dB_j^H(z)\right\|_2^2
\\*
\leq C\,\xi^2 (s-v) \Bigg(\sup_{r \in [v,s]} \left\| u(r, \cdot ) \right\|_2^2+1
+   \int_v^s  \left( \int_v^z\frac{\left\|u(z,\cdot)-u(r,\cdot)\right\|_2  }{(z-r)^{\sigma+1}}\,dr  \right)^2  \,dz \Bigg).
\end{multline}

For the second term in the right-hand side of \eqref{eq:A-A} we have that
\begin{align*}
\MoveEqLeft[1]
\left\|\sum_{j=1}^\infty \lambda_j\int_0^v \left[\varsigma_{j,s}(u)(z,\cdot)-\varsigma_{j,v}(u)(z,\cdot)\right] dB_j^H(z)\right\|_2
\\*
&\leq C\,\xi \Bigg( \sup_{j\in \mathbb{N}} \int_0^v \frac{ \left\|\varsigma_{j,s}(u)(z,\cdot)-\varsigma_{j,v}(u)(z,\cdot)\right\|_2 }{z^\sigma} dz
\\
&\quad + \sup_{j\in \mathbb{N}} \int_0^v\!\! \int_0^z  \frac{    \Big\| [\varsigma_{j,s}(u)(z,\cdot)-\varsigma_{j,v}(u)(z,\cdot)]-[\varsigma_{j,s}(u)(r,\cdot)- \varsigma_{j,v}(u)(r,\cdot)]\Big\|_2}{(z-r)^{\sigma+1}}\,drdz \Bigg)
\\
& \eqqcolon C\,\xi \Big(  {\mathcal L}_{1} + {\mathcal L}_{2} \Big) . 
\end{align*}
Using Lemma \ref{lem9}, we obtain  
\[
\left\| \varsigma_{j,s}(u)(z,\cdot)- \varsigma_{j,v}(u)(z,\cdot) \right\|_2 \leq\,C \,(v-z)^{-\frac{\delta}{2}}\,(s-v)^{\frac{\delta}{2}}
\left (\left\|u(z,\cdot)\right\|_2+1\right).
\]
Thus, 
\begin{align*}
{\mathcal L}_1 & \leq C (s-v)^{\frac{\delta}{2}}
\int_0^v \frac{ (v-z)^{-\frac{\delta}{2}}}{z^\sigma}
\left (\left\|u(z,\cdot)\right\|_2+1\right)\,dz
\\
& \leq  C (s-v)^{\frac{\delta}{2}} 
\left(\sup_{z \in [0,v]} \left\|u(z,\cdot)\right\|_2+1\right )
\int_0^v \frac{ (v-z)^{-\frac{\delta}{2}}}{z^\sigma}
\,dz
\\
&=C (s-v)^{\frac{\delta}{2}} 
\left(\sup_{z \in [0,v]} \left\|u(z,\cdot)\right\|_2+1\right )
v^{-\delta /2 + 1 - \sigma }
B\left (1-\tfrac\delta2,1-\sigma\right )
\\
&\le C (s-v)^{\frac{\delta}{2}}\left(\sup_{z \in [0,v]} \left\|u(z,\cdot)\right\|_2+1\right )
\end{align*}
because $\sigma < 1/2$ and $\delta <  1$.

Now by Lemma \ref{lem9}, we have
\begin{align*}
\MoveEqLeft\left\| \varsigma_{j,s}(u)(z,\cdot)- \varsigma_{j,v}(u)(z,\cdot)-\varsigma_{j,s}(u)(r,\cdot)+ \varsigma_{j,v}(u)(r,\cdot) \right\|_2
\\
&\leq C \Bigl( (s-v)^{\frac{\delta}{2}} (v-z)^{-\frac{\delta}{2}}\left\| u(z,\cdot)-u(r,\cdot) \right\|_2\\
&\quad+ (s-v)^{\frac{\delta}{2}}(v-z)^{-\delta}(z-r)^{\frac{\delta}{2}} \left (\left\|u(r,\cdot) \right\|_2+1\right ) \Bigr).
\end{align*}
Thus,
\begin{align}
{\mathcal L}_2  &=  \sup_{j\in \mathbb{N}} \int_0^v\!\! \int_0^z  \frac{    \Big\| [\varsigma_{j,s}(u)(z,\cdot)-\varsigma_{j,v}(u)(z,\cdot)]-[\varsigma_{j,s}(u)(r,\cdot)- \varsigma_{j,v}(u)(r,\cdot)]\Big\|_2}{(z-r)^{\sigma+1}}\,drdz
\notag\\
& \le  C  \int_0^v\!\! \int_0^z  \frac{    (s-v)^{\frac{\delta}{2}} (v-z)^{-\frac{\delta}{2}}\,\left\| u(z,\cdot)-u(r,\cdot) \right\|_2}{(z-r)^{\sigma+1}}\,drdz
\notag\\
&\quad +  C  \int_0^v \!\!\int_0^z  \frac{ (s-v)^{\frac{\delta}{2}}(v-z)^{-\delta}(z-r)^{\frac{\delta}{2}} \left (\left\|u(r,\cdot) \right\|_2+1\right )  }{(z-r)^{\sigma+1}}\,drdz
\notag\\
&\eqqcolon  C (s-v)^{\frac{\delta }{2}}\left ( {\mathcal L}_{2,1} + {\mathcal L}_{2,2}\right ).
\label{eq:L_2}
\end{align}

By Fubini's theorem, we have 
\begin{align*}
{\mathcal L}_{2,2} &= 
\int^v_0\!\!\int_0^z    (z-r)^{\frac{\delta}{2} - \sigma-1} \,(v-z)^{-\delta}\,\left (\left\|u(r,\cdot) \right\|_2+1\right )\,dr\,dz
\\
&=\int^v_0 \left (\left\|u(r,\cdot) \right\|_2+1\right ) \left(\int_r^v(z-r)^{\frac{\delta}{2} - \sigma-1} \,(v-z)^{-\delta} \,dz\right) \,dr.
\end{align*}
Since $2\sigma<\delta<1$, we see that
\[
\int_r^v(z-r)^{\frac{\delta}{2} - \sigma-1} \,(v-z)^{-\delta} \,dz
= B\left(1-\delta,\tfrac{\delta}{2} - \sigma\right) (v-r)^{-\frac{\delta}{2}-\sigma}.
\]
 Therefore,
 \begin{align}
  {\mathcal L}_{2,2} &\le C\int_0^v \left (\left\|u(r,\cdot) \right\|_2+1\right ) (v-r)^{-\frac{\delta }{2}-\sigma} \,dr
\notag\\
   &\le   C \left (\sup_{r \in [0,v]} \left\|u(r,\cdot)\right\|_2 +1\right) \int_0^v  (v-r)^{-\frac{\delta }{2}-\sigma} \,dr
\notag\\
 & \le C v^{1-\frac{\delta }{2}-\sigma}\left (\sup_{r \in [0,v]} \left\|u(r,\cdot)\right\|_2 +1\right)
\le C \left (\sup_{r \in [0,v]} \left\|u(r,\cdot)\right\|_2 +1\right). 
\label{eq:L_22}
 \end{align}
 
It follows from \eqref{eq:L_2} and \eqref{eq:L_22} that
\[
{\mathcal L}_2 \le C (s-v)^{\frac{\delta }{2}} \left (\int_0^v \int_0^z  \frac{  (v-z)^{-\frac{\delta}{2}}\,\left\| u(z,\cdot)-u(r,\cdot) \right\|_2}{(z-r)^{\sigma+1}}\,drdz
+ \sup_{r \in [0,v]} \left\|u(r,\cdot)\right\|_2 +1\right),
\]
and consequently,
\begin{multline}\label{eq:A-2}
\left\| \sum_{j=1}^\infty \lambda_j\int_0^v \left[\varsigma_{j,s}(u)(z,\cdot)-\varsigma_{j,v}(u)(z,\cdot)\right] dB_j^H(z)\right\|_2 
  \leq  C\,\xi \Big(  {\mathcal L}_{1} + {\mathcal L}_{2} \Big)
  \\
\leq C\,\xi(s-v)^{\frac{\delta}{2}} \Bigg( \sup_{r \in [0,v]} \left\|u(r,\cdot)\right\|_2+1
 +  \int_0^v \int_0^z\frac{  (v-z)^{-\frac{\delta}{2}}\,\left\| u(z,\cdot)-u(r,\cdot) \right\|_2}{(z-r)^{\sigma+1}}\,drdz    \Bigg).
\end{multline} 

Hence, combining \eqref{eq:A-A}--\eqref{eq:A-2}, we obtain
\begin{align*}
\MoveEqLeft
\left\|(\A u)(s,\cdot)- (\A u)(v,\cdot)\right\|_2\leq\,C\,\xi\Bigg[
(s-v)^{1/2} \left (\sup_{r \in [v,s]} \left\|u(r,\cdot)\right\|_2+1\right ) 
 \\
& +  (s-v)^{1/2} \Bigg( \int_v^s \Big( \int_v^z  \frac{    \left\| u(z,\cdot)- u(r,\cdot)\right\|_2}{(z-r)^{\sigma+1}}\;dr  \Big)^2\,dz \Bigg)^{1/2}
\\
& + (s-v)^{\frac{\delta}{2}}\left ( \sup_{r \in [0,v]} \left\|u(r,\cdot)\right\|_2+1\right )
\\
& + (s-v)^{\frac{\delta}{2}} \int^v_0(v-z)^{-\frac{\delta}{2}}   \int_0^z \frac{\left\| u(z,\cdot)-u(r,\cdot) \right\|_2 }{(z-r)^{\sigma+1}}\,\;dr\,dz
\Bigg].
\end{align*}
Since $(s-v)^{\frac12} = (s-v)^{\frac{1-\delta}2} (s-v)^{\frac{\delta}{2}}
\le T^{\frac{1-\delta}2} (s-v)^{\frac{\delta}{2}}$, we see that
\begin{align*}
\MoveEqLeft
\left\|(\A u)(s,\cdot)- (\A u)(v,\cdot)\right\|_2\leq\,C\,\xi (s-v)^{\frac{\delta}{2}}\Bigg[
\sup_{r \in [0,s]} \left\|u(r,\cdot)\right\|_2+1 
 \\
& + \Bigg( \int_v^s \Big( \int_v^z  \frac{    \left\| u(z,\cdot)- u(r,\cdot)\right\|_2}{(z-r)^{\sigma+1}}\;dr  \Big)^2\,dz \Bigg)^{1/2} 
\\
& + \int^v_0(v-z)^{-\frac{\delta}{2}}   \int_0^z \frac{\left\| u(z,\cdot)-u(r,\cdot) \right\|_2 }{(z-r)^{\sigma+1}}\,\;dr\,dz\Bigg].
\end{align*}

Therefore, 
\begin{equation}\label{eq:Jh}
\norm{\A u}_{\sigma,1,t}^2=
\int_0^t\left(\int_0^s\frac{\left\|(\A u)(s,\cdot)-(\A u)(v,\cdot)\right\|_2  }{(s-v)^{\sigma+1}}\,dv\right)^2 ds
\leq\,C\,\xi^2\,\int_0^t \sum_{i=1}^3H_i^2(s)\,ds,
\end{equation}
where  
\begin{align*}
H_1(s)&=\left(\sup_{r \in [0,s]} \left\|u(r,\cdot)\right\|_2 + 1\right)
\int_0^s (s-v)^{\frac\delta2-\sigma-1}  dv,
\\
H_2(s)&=\int_0^s (s-v)^{\frac\delta2-\sigma-1} \Bigg(\int_v^s \Big( \int_v^z  \frac{    \left\| u(z,\cdot)- u(r,\cdot)\right\|_2}{(z-r)^{\sigma+1}}\,dr \Big)^2 \,dz\, \Bigg)^{1/2} dv,
 \\ 
H_3(s)&=\int_0^s (s-v)^{ \frac{\delta}{2} -\sigma-1}\int^v_0(v-z)^{-\frac{\delta}{2}}   \int_0^z \frac{\left\| u(z,\cdot)-u(r,\cdot) \right\|_2 }{(z-r)^{\sigma+1}}\,\;dr\,dz\,dv.
\end{align*}

The first term can be bounded by
\begin{equation}\label{eq:H_1}
H_1(s)=\left(\sup_{r \in [0,s]} \left\|u(r,\cdot)\right\|_2 + 1\right)
s^{\frac\delta2-\sigma} 
\le C \left (\sup_{r\in[0,s]}\left\|u(r,\cdot)\right\|_2 + 1\right ),
\end{equation}
because $\frac\delta2>\sigma$.

Concerning $H_2$, we have
\begin{equation}\label{eq:H_2}
H_2(s)\le \norm{u}_{\sigma,1,s} \int_0^s (s-v)^{\frac\delta2-\sigma-1} dv  
\le C \norm{u}_{\sigma,1,s}.
\end{equation}

For the last one, applying Fubini's theorem, we get
\begin{align*}
 H^2_3(s) &=\left[\int_0^s \left(\int^s_z (s-v)^{ \frac{\delta}{2} -\sigma-1} (v-z)^{-\frac{\delta}{2}} \,dv\right)  \int_0^z \frac{\left\| u(z,\cdot)-u(r,\cdot) \right\|_2 }{(z-r)^{\sigma+1}}\,\;dr\,\,dz\right]^2
 \\
& \leq B\left (1-\tfrac\delta2, \tfrac\delta2 - \sigma \right )^2 \left[\int_0^s  (s-z)^{-\sigma} \int_0^z \frac{\left\| u(z,\cdot)-u(r,\cdot) \right\|_2 }{(z-r)^{\sigma+1}}\,\;dr\,dz\right]^2.
\end{align*}
Since $\sigma < 1/2$, we can write, applying the Cauchy--Schwartz inequality,
\begin{equation}\label{eq:H_3}
H^2_3(s) \leq  C \int_0^s  (s-z)^{-2\sigma} \,dz\int_0^s \left(\int_0^z \frac{\left\| u(z,\cdot)-u(r,\cdot) \right\|_2 }{(z-r)^{\sigma+1}}\,\;dr\,\right)^2\,dz
\le C\norm{u}_{\sigma,1,s}^2. 
\end{equation}

Finally, combining \eqref{eq:Jh}--\eqref{eq:H_3}, we get
\begin{equation}
\label{Jh}
\norm{\A u}_{\sigma,1,t}^2
\leq \,C\,\xi^2 \left( \int_0^t \left\| u\right\|^2_{\sigma,2,s}\,ds+1\right )
\leq \,C\,\xi^2 \left(\left\| u\right\|^2_{\sigma,2,t}+1\right),
\end{equation}
because $\left\| u\right\|^2_{\sigma,2,s}\leq \left\| u\right\|^2_{\sigma,2,t}$ for $s\leq t$.

Hence, from \eqref{lem7}, \eqref{eq:Au-1} and \eqref{Jh} we get the result.
\end{proof}
\begin{proposition}\label{prop12}
Let
$u,\tilde{u}\in  \mathcal{B}^{\sigma,2}\left([0,T];L^2(\R) \right)$.
Then for all $t\in[0,T]$,
\[
\left\|\A u - \A\tilde{u} \right\|^2_{\sigma,2,t}
\leq\,C\xi^2\,\int_0^t \left\|u-\tilde{u}\right\|^2_{\sigma,2,s}\,ds.
\]
\end{proposition}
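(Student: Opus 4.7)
My plan is to adapt the proof of Proposition~\ref{prop11} to the difference. Using the affine form of $h$, namely $h(u)-h(\tilde u)=h_1(u-\tilde u)$, and writing $w \coloneqq u-\tilde u$, we have
\[
(\A u-\A\tilde u)(t,x) = h_1\sum_{j=1}^\infty \lambda_j\int_0^t\!\!\int_\R G(t-s,x,y)\,w(s,y)\,e_j(y)\,dy\,dB_j^H(s),
\]
which has exactly the form of $\A w$ but with the additive constant $h_2$ in $h$ set to zero. Consequently each bound obtained from Lemmas~\ref{lem8} and~\ref{lem9} in the proof of Proposition~\ref{prop11} transfers verbatim to this difference, with the ``$+1$'' contributions (which tracked $h_2$) omitted throughout.

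I then plan to re-run the two main estimates of Proposition~\ref{prop11}, the one on $\sup_{s\le t}\|(\A u-\A\tilde u)(s,\cdot)\|_2^2$ and the one on $\|\A u-\A\tilde u\|_{\sigma,1,t}^2$, this time keeping every time integration in integrated form instead of passing to a supremum. The seminorm part is essentially already in such form in Proposition~\ref{prop11}: applied to $w$ and with the $``+1''$ stripped, the pointwise bounds $H_i(s)^2 \le C\|w\|_{\sigma,2,s}^2$ for $i=1,2,3$ obtained in (\ref{eq:H_1})--(\ref{eq:H_3}) integrate directly in $s$ to yield $\|\A u-\A\tilde u\|_{\sigma,1,t}^2 \le C\xi^2\int_0^t\|w\|_{\sigma,2,s}^2\,ds$. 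For the supremum part, at each $s\in[0,t]$ I will bound $\|(\A u-\A\tilde u)(s,\cdot)\|_2^2$ as in Proposition~\ref{prop11}, but in the step corresponding to (\ref{beta}) I will replace the supremum bound by a Cauchy--Schwarz estimate,
\[
\int_0^s (s-v)^{-\sigma}\|w(v,\cdot)\|_2\,dv \le \Bigl(\int_0^s (s-v)^{-2\sigma}\,dv\Bigr)^{1/2}\Bigl(\int_0^s \|w(v,\cdot)\|_2^2\,dv\Bigr)^{1/2},
\]
whose right-hand side is bounded by $C\bigl(\int_0^s\|w\|_{\sigma,2,r}^2\,dr\bigr)^{1/2}$ since $2\sigma<1$, and analogously for the $A_1$-type term.

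The step I expect to be the main obstacle is the middle contribution
\[
A_2(s)\coloneqq \int_0^s\!\!\int_0^r \|w(r)-w(v)\|_2\,(r-v)^{-\sigma-1}\,dv\,dr
\]
arising in the pointwise bound on $\|(\A u-\A\tilde u)(s,\cdot)\|_2$. Direct Cauchy--Schwarz in the outer variable only yields $A_2(s)^2 \le s\,\|w\|_{\sigma,1,s}^2$, dominated by $s\,\|w\|_{\sigma,2,s}^2$, which in general does \emph{not} majorise $\int_0^s\|w\|_{\sigma,2,r}^2\,dr$ in the right direction. The refinement I would use is to interchange the order of integration, writing $A_2(s)=\int_0^s\!\int_v^s\cdot\,dr\,dv$, and then apply Cauchy--Schwarz in the outer $v$-integral with an integrable weight of the form $(s-v)^{-\alpha}$, $\alpha\in(2\sigma,1)$, so that after a further Fubini rearrangement the bound reduces to a constant multiple of $\int_0^s\|w\|_{\sigma,2,r}^2\,dr$, the running time $r$ now playing the role of the upper index in the Besov seminorm. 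Combining this integrated sup estimate with the seminorm estimate from the previous paragraph gives the inequality.
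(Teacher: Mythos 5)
Your overall strategy coincides with the paper's: the reduction $h(u)-h(\tilde u)=h_1(u-\tilde u)$ is precisely what Lemmas~\ref{lem9b} and~\ref{lem10} encode (they are the ``difference'' versions of Lemmas~\ref{lem8} and~\ref{lem9}, with the $+1$ contributions gone), and your treatment of the seminorm half --- bounding each of the quantities corresponding to \eqref{eq:H_1}--\eqref{eq:H_3} (the paper's $G_1,\dots,G_6$) pointwise in $s$ by $C\norm{u-\tilde u}^2_{\sigma,2,s}$ and then integrating --- is exactly what the paper does. That half of the argument is correct.

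The gap is in the step you yourself flag as the main obstacle, and the repair you sketch does not close it. Write $w=u-\tilde u$ and $g_w(r)=\int_0^r\norm{w(r,\cdot)-w(v,\cdot)}_2\,(r-v)^{-\sigma-1}\,dv$, so that $A_2(s)=\int_0^s g_w(r)\,dr$. Interchanging the order of integration produces the same double integral, and after the interchange the inner integration runs over the \emph{later} time variable, so it no longer matches the Besov seminorm, whose inner integral is over the earlier one. A weighted Cauchy--Schwarz with weight $(s-v)^{-\alpha}$, $\alpha<1$, gives $A_2(s)^2\le C\,s^{1-\alpha}\int_0^s(s-v)^{\alpha}g_w(v)^2\,dv\le C\,s\,\norm{w}^2_{\sigma,1,s}$, i.e.\ exactly the estimate you started from. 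The obstruction is structural: $\norm{w}^2_{\sigma,1,s}=\int_0^s g_w(v)^2\,dv$, whereas $\int_0^s\norm{w}^2_{\sigma,1,r}\,dr=\int_0^s(s-v)\,g_w(v)^2\,dv$ carries an extra factor $(s-v)$ vanishing at $v=s$; a $g_w$ concentrated near $v=s$ (not excluded by membership in $\mathcal{B}^{\sigma,2}$) makes the first quantity much larger than the second, and no integrable weight in Cauchy--Schwarz bridges this. So the claimed bound $A_2(s)^2\le C\int_0^s\norm{w}^2_{\sigma,2,r}\,dr$ is not established by the proposal. For comparison, the paper's own proof disposes of this same term with the bare estimate $\bigl(\int_0^t g_w\bigr)^2\le t\norm{w}^2_{\sigma,1,t}$ and then passes to $C\xi^2\int_0^t\norm{w}^2_{\sigma,2,s}\,ds$ without further comment; you have therefore correctly located the one genuinely delicate point of the argument, but your proposal supplies the name of a technique rather than a mechanism that actually produces the integrated right-hand side.
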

\begin{proof}
Recall that $\sigma \in \left(1-H,\frac{1}{2}\right)$.
Fix $\delta\in\bigl(\max\{\sigma,\frac13\},1\bigr)$.

By the definition of $\A$, we can write
\begin{align*}
\MoveEqLeft
\left\|(\A u)(t,\cdot)-(\A\tilde{u})(t,\cdot) \right\|_2
= \left\|\sum_{j=1}^\infty\lambda_j\int_0^t \left[ \varsigma_{j,t}(u)(s,\cdot)-\varsigma_{j,t}(\tilde{u})(s,\cdot)\right]dB^{H}_j(s)\right\|_2
\\
&\leq C\, \xi \sup_{j\in\mathbb{N}}
\int_0^t \Bigg(\frac{ \left\|\varsigma_{j,t}(u)(s,\cdot)-\varsigma_{j,t}(\tilde{u})(s,\cdot)\right\|_2 }{s^\sigma}
\\
&\quad + \int_0^s  \frac{    \left\| \varsigma_{j,t}(u)(s,\cdot)-\varsigma_{j,t}(\tilde{u})(s,\cdot)- \varsigma_{j,t}(u)(r,\cdot)+\varsigma_{j,t}(\tilde{u})(r,\cdot)\right\|_2}{(s-r)^{\sigma+1}}\,dr\Bigg)ds.
\end{align*}
It follows from Lemma \ref{lem9b} that 
\begin{align*}
\MoveEqLeft
\left\|(\A u)(t,\cdot)- (\A\tilde{u})(t,\cdot) \right\|_2
\leq C\, \xi \Bigg\{\int_0^t 
\Bigg(\frac{ \left\|u(s,\cdot)-\tilde{u}(s,\cdot)\right\|_2 }{s^\sigma}
\\
&+ \int_0^s 
\frac{(t-s)^{-\delta}\, \left\| u(r,\cdot)-\tilde{u}(r,\cdot) \right\|_2  }{(s-r)^{\sigma+1-\delta}}\,dr\Bigg)ds
\\
&+ \int_0^t \int_0^s \frac{ \left\|u(s,\cdot)-\tilde{u}(s,\cdot)- u(r,\cdot)+\tilde{u}(r,\cdot) \right\|_2      }{(s-r)^{\sigma+1}}\,dr\,ds \Bigg\},
\end{align*}
where the right-hand side is finite, because
$u,\tilde{u}\in  \mathcal{B}^{\sigma,2}\left([0,T];L^2(\R) \right)$.

First, the Cauchy--Schwarz inequality implies that for every
$\sigma < \frac{1}{2}$
\begin{align*}
\int_0^t \frac{ \left\|u(s,\cdot)-\tilde{u}(s,\cdot)\right\|_2 }{s^\sigma}ds &\leq \left(\int_0^t s^{-2\sigma}\,ds\right)^{1/2} \left(\int_0^t  \left\|u(s,\cdot)-\tilde{u}(s,\cdot)\right\|_2^2\,ds \right)^{1/2}
\\
&\leq C \left(\int_0^t  \sup_{r\in(0,s)}\left\|u(r,\cdot)-\tilde{u}(r,\cdot)\right\|_2^2\,ds \right)^{1/2}
\end{align*}
Second, it follows from Fubini's theorem and the Cauchy--Schwarz inequality that for every $\sigma < \delta <1$
\begin{align*}
\MoveEqLeft\int_0^t\!\!\int_0^s 
\frac{(t-s)^{-\delta}\, \left\| u(r,\cdot)-\tilde{u}(r,\cdot) \right\|_2  }{(s-r)^{\sigma+1-\delta}}\,dr\,ds
\\
&= \int_0^t  \left\| u(r,\cdot)-\tilde{u}(r,\cdot) \right\|_2 \int_r^t 
(t-s)^{-\delta}(s-r)^{-\sigma-1+\delta}\,ds\,dr
\\
&= B(1-\delta,\delta-\sigma)\int_0^t (t-r)^{-\sigma}\left\| u(r,\cdot)-\tilde{u}(r,\cdot) \right\|_2 
\,dr
\\
&\leq C \left(\int_0^t (t-r)^{-2\sigma}\,dr\right)^{\frac12} \left(\int_0^t \,\sup_{r\in(0,s)} \left\|u(r,\cdot)-\tilde{u}(r,\cdot)\right\|_2^2\,ds \right)^{\frac12}
\\
&\leq C \left(\int_0^t \,\sup_{r\in(0,s)} \left\|u(r,\cdot)-\tilde{u}(r,\cdot)\right\|_2^2\,ds \right)^{\frac12}.
\end{align*}
Hence,
\begin{multline*}
\left\|(\A u)(t,\cdot)-(\A\tilde{u})(t,\cdot) \right\|_2
\leq C\, \xi \Bigg\{ \left(\int_0^t \,\sup_{r\in(0,s)} \left\|u(r,\cdot)-\tilde{u}(r,\cdot)\right\|_2^2\,ds \right)^{1/2}
\\
+t\int_0^t \left(\int_0^s \frac{ \left\|u(s,\cdot)-\tilde{u}(s,\cdot)- u(r,\cdot)+\tilde{u}(r,\cdot) \right\|_2      }{(s-r)^{\sigma+1}}\,dr\right)^2\,ds \Bigg\}.
\end{multline*}
Therefore, we obtain
\[
\sup_{s\in(0,t)}\left\|(\A u)(s,\cdot)-(\A\tilde{u})(s,\cdot) \right\|^2_{2}
\leq C\, \xi^2\int_0^t \left\|u-\tilde{u}\right\|^2_{\sigma,2,s}\,ds , 
\]

On the other hand, we have that for $r<s$
\begin{align*}
\MoveEqLeft[1]
\left\|(\A u)(s,\cdot)-(\A\tilde{u})(s,\cdot)-(\A u)(r,\cdot)+(\A\tilde{u})(r,\cdot) \right\|_2
\\*
&\leq\, \left\|\sum_{j=1}^\infty\lambda_j
\int_r^s \left[ \varsigma_{j,s}(u)(v,\cdot)-\varsigma_{j,s}(\tilde{u})(v,\cdot)\right]dB^H_j(v)\right\|_2
\\
&\quad+ \left\|\sum_{j=1}^\infty\lambda_j
\int_0^r \left[ \varsigma_{j,s}(u)(v,\cdot)-\varsigma_{j,s}(\tilde{u})(v,\cdot)-
\varsigma_{j,r}(u)(v,\cdot)+\varsigma_{j,r}(\tilde{u})(v,\cdot)\right]dB^H_j(v)\right\|_2
\\
&\eqqcolon \mathcal{J}_1+\mathcal{J}_2.
\end{align*}
The first term can be bounded as follows
\begin{align*}
\mathcal{J}_1&=\left\|\sum_{j=1}^\infty\lambda_j
\int_r^s \left[ \varsigma_{j,s}(u)(v,\cdot)-\varsigma_{j,s}(\tilde{u})(v,\cdot)\right]dB^H_j(v)\right\|_2
\\
&\leq  C\, \xi \sup_{j\in\mathbb{N}}
\int_r^s \Bigg(\frac{ \left\|\varsigma_{j,s}(u)(v,\cdot)-\varsigma_{j,s}(\tilde{u})(v,\cdot)\right\|_2 }{(v-r)^\sigma}
\\
&\quad+ \int_r^v  \frac{    \left\| \varsigma_{j,s}(u)(v,\cdot)-\varsigma_{j,s}(\tilde{u})(v,\cdot)- \varsigma_{j,s}(u)(z,\cdot)+\varsigma_{j,s}(\tilde{u})(z,\cdot)\right\|_2}{(v-z)^{\sigma+1}}\,dz\Bigg)dv.
\end{align*}
By Lemma \ref{lem9b}, 
\begin{multline*}
\mathcal{J}_1\leq C\xi \int_r^s \Biggl( \frac{ \left\|u(v,\cdot)-\tilde{u}(v,\cdot)\right\|_2 }{(v-r)^\sigma}+  \int_r^v  (s-v)^{-\delta}\frac{    \left\| u(z,\cdot)-\tilde{u}(z,\cdot)\right\|_2}{(v-z)^{\sigma+1-\delta}}\,dz
\\
+ \int_r^v  \frac{    \left\| u(v,\cdot)-\tilde{u}(v,\cdot)- u(z,\cdot)+\tilde{u}(z,\cdot)\right\|_2}{(v-z)^{\sigma+1}}\,dz\Biggr)dv.
\end{multline*}
The term $\mathcal{J}_2$ can be written and bounded as follows 
\begin{align*}
\mathcal{J}_2 &= \left\|\sum_{j=1}^\infty\lambda_j
\int_0^r \left[ \varsigma_{j,s}(u)(v,\cdot)-\varsigma_{j,s}(\tilde{u})(v,\cdot)-
\varsigma_{j,r}(u)(v,\cdot)+\varsigma_{j,r}(\tilde{u})(v,\cdot)\right]dB^H_j(v)\right\|_2
\\
&= \left\|\sum_{j=1}^\infty\lambda_j
\int_0^r \left[ \varsigma^*_{j,s,r}(u)(v,\cdot)-\varsigma^*_{j,s,r}(\tilde{u})(v,\cdot)\right]dB^H_j(v)\right\|_2
\\
&\leq  C\, \xi \sup_{j\in\mathbb{N}}
\int_0^r \Bigg(\frac{ \left\|\varsigma^*_{j,s,r}(u)(v,\cdot)-\varsigma^*_{j,s,r}(\tilde{u})(v,\cdot)\right\|_2 }{v^\sigma}
\\
&\quad+ \int_0^v  \frac{    \left\| \varsigma^*_{j,s,r}(u)(v,\cdot)-\varsigma^*_{j,s,r}(\tilde{u})(v,\cdot)- \varsigma^*_{j,s,r}(u)(z,\cdot)+\varsigma^*_{j,s,r}(\tilde{u})(z,\cdot)\right\|_2}{(v-z)^{\sigma+1}}\,dz\Bigg)dv.
\end{align*}
By Lemma \ref{lem10}, we get
\begin{align*}
\mathcal{J}_2 &\leq  C\, \xi \int_0^r \biggl(  (s-r)^{\frac{\delta}{2}} (r-v)^{- \frac{\delta}{2}}   \frac{ \left\|u(v,\cdot)-\tilde{u}(v,\cdot)\right\|_2 }{v^\sigma}
\\
&\quad+ \int_0^v (s-r)^{\frac{\delta}{2}}\,(r-v)^{-\frac{\delta}{2}}\, \frac{    \left\| u(z,\cdot)-\tilde{u}(z,\cdot)\right\|_2}{(v-z)^{\sigma+1- \frac{\delta}{2}   }}\,dz
\\
&\quad + \int_0^v (s-r)^{\frac{\delta}{2}}\,(r-v)^{-\frac{\delta}{2}}\, \frac{    \left\|u(v,\cdot)-\tilde{u}(v,\cdot)- u(z,\cdot)+\tilde{u}(z,\cdot)\right\|_2}{(v-z)^{\sigma+1}}\,dz\biggr)dv.
\end{align*}
Therefore 
\begin{multline*}
\int_0^t \left(\int_0^s \frac{ \left\|(\A u)(s,\cdot)-(\A\tilde{u})(s,\cdot)-(\A u)(r,\cdot)+(\A\tilde{u})(r,\cdot) \right\|_2      }{(s-r)^{\sigma+1}}\,dr\right)^2\,ds
\\
= C\, \xi^2\int_0^t \sum_{i=1}^6 G_i^2(s)\,ds,
\end{multline*}
where 
\begin{align*}
G_1(s)&\coloneqq \int_0^s (s-r)^{-\sigma-1}\int_r^s  \frac{ \left\|u(v,\cdot)-\tilde{u}(v,\cdot)\right\|_2 }{(v-r)^\sigma}\,dv\,dr,
\\ 
G_2(s)&\coloneqq \int_0^s (s-r)^{-\sigma-1}\int_r^s\!\! \int_r^v  (s-v)^{-\delta}\frac{    \left\| u(z,\cdot)-\tilde{u}(z,\cdot)\right\|_2}{(v-z)^{\sigma+1-\delta}}\,dz\,dv\,dr,
\\ 
G_3(s)&\coloneqq \int_0^s (s-r)^{-\sigma-1}\int_r^s\!\!    \int_r^v  \frac{    \left\| u(v,\cdot)-\tilde{u}(v,\cdot)- u(z,\cdot)+\tilde{u}(z,\cdot)\right\|_2}{(v-z)^{\sigma+1}}\,dz\,dv\,dr,
\\ 
G_4(s)&\coloneqq \int_0^s (s-r)^{\frac{\delta}{2}-\sigma-1}\int_0^r  (r-v)^{- \frac{\delta}{2}}   \frac{ \left\|u(v,\cdot)-\tilde{u}(v,\cdot)\right\|_2 }{v^\sigma}\,dv\,dr,
\\ 
G_5(s)&\coloneqq \int_0^s (s-r)^{\frac{\delta}{2}-\sigma-1}\int_0^r\!\! \int_0^v (r-v)^{-\frac{\delta}{2}}\, \frac{    \left\| u(z,\cdot)-\tilde{u}(z,\cdot)\right\|_2}{(v-z)^{\sigma+1- \frac{\delta}{2}   }}\,dz\,dv\,dr,
\\  
G_6(s)&\coloneqq \int_0^s (s-r)^{\frac{\delta}{2}-\sigma-1}\\
&\quad\times\int_0^r\!\! \int_0^v (r-v)^{-\frac{\delta}{2}}\, \frac{    \left\|u(v,\cdot)-\tilde{u}(v,\cdot)- u(z,\cdot)+\tilde{u}(z,\cdot)\right\|_2}{(v-z)^{\sigma+1}}\,dz\,dv\,dr.
\end{align*}
Let us bound each of terms $G_i$, $i=1,\dots,6$.
For every $\sigma \in (0,1/2)$ 
\begin{align*}
G_1(s) &\leq  \sup_{z\in[0,s]} \left\|u(z,\cdot)-\tilde{u}(z,\cdot)\right\|_2 \left(\int_0^s (s-r)^{-\sigma-1}\int_r^s  (v-r)^{-\sigma}\,dv\,dr\right)
\\
&=C\, \sup_{z\in[0,s]} \left\|u(z,\cdot)-\tilde{u}(z,\cdot)\right\|_2.
\end{align*}
and by the same technique we get for every $\delta > \sigma$
\[
G_2(s) \leq \,C\, \sup_{z\in[0,s]} \left\|u(z,\cdot)-\tilde{u}(z,\cdot)\right\|_2.
\]
By the Cauchy--Schwartz inequality, we obtain
\[
G_3(s)\leq \int_0^s\! (s-r)^{-\sigma-\frac{1}{2}}\left(\int_r^s \!\left(   \int_r^v\!  \frac{    \norm{u(v,\cdot)-\tilde{u}(v,\cdot)- u(z,\cdot)+\tilde{u}(z,\cdot)}_2}{(v-z)^{\sigma+1}}\,dz\right)^2 \!dv\right)^{\frac12}\!dr.
\]
Similarly to \eqref{eq:H_2}, we get the bound
\[
G_3(s)\leq \, C \left[\int_0^s \left(   \int_0^v  \frac{    \left\| u(v,\cdot)-\tilde{u}(v,\cdot)- u(z,\cdot)+\tilde{u}(z,\cdot)\right\|_2}{(v-z)^{\sigma+1}}\,dz\right)^2 \,dv
\right]^{1/2}.
\]
It is not hard to see that  
\begin{align*}
G_4(s)&\leq \sup_{z\in[0,s]} \left\|u(z,\cdot)-\tilde{u}(z,\cdot)\right\|_2 \int_0^s (s-r)^{\frac{\delta}{2}-\sigma-1}\int_0^r  (r-v)^{- \frac{\delta}{2}}v^{-\sigma}\,dv\,dr
\\
&\leq C\, \sup_{z\in[0,s]} \left\|u(z,\cdot)-\tilde{u}(z,\cdot)\right\|_2. 
\end{align*}
In the same way,
\[
G_5(s)\leq C\, \sup_{z\in[0,s]} \left\|u(z,\cdot)-\tilde{u}(z,\cdot)\right\|_2. 
\]
Finally, the term $G_6$ can be bounded similarly to \eqref{eq:H_3}. We obtain
\[
G_6(s)\leq C \left[\int_0^s \left(   \int_0^v  \frac{    \left\| u(v,\cdot)-\tilde{u}(v,\cdot)- u(z,\cdot)+\tilde{u}(z,\cdot)\right\|_2}{(v-z)^{\sigma+1}}\,dz\right)^2 \,dv
\right]^{1/2}.
\]
Then we have 
\[ 
\left\|(\A u)(s,\cdot)-(\A\tilde{u})(s,\cdot) \right\|^2_{\sigma,2,t}\leq C \xi^2\int_0^t \left\|u-\tilde{u}\right\|^2_{\sigma,2,s}\,ds. 
\qedhere\]
\end{proof}

\subsection{Existence and uniqueness of mild solution}
\begin{theorem}
For every $H\in(\frac{1}{2},1),$ there exists a unique mild solution to the problem \eqref{e:2}.
\end{theorem}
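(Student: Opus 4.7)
The plan is to construct the mild solution by a Picard iteration scheme, working pathwise on the set $\{\xi_{\sigma,H,T}<\infty\}$, which by assumption is of full probability. On this set the constant $C\xi^2$ appearing in Propositions~\ref{prop11} and~\ref{prop12} is finite, and these two estimates provide exactly the input needed for a contraction-type argument in the Banach space $\mathcal{B}^{\sigma,2}([0,T];L^2(\R))$.

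For existence, I would first define $u_0\equiv 0$ and $u_{n+1}:=\A u_n$ for $n\ge 0$. Since $u_0\in \mathcal{B}^{\sigma,2}$, Proposition~\ref{prop11} applied iteratively shows $u_n\in \mathcal{B}^{\sigma,2}$ a.s.\ for all $n$. To show the sequence is Cauchy, I apply Proposition~\ref{prop12} to consecutive iterates and iterate the resulting inequality: since
\[
\norm{u_{n+1}-u_n}_{\sigma,2,t}^2 \le C\xi^2 \int_0^t \norm{u_n-u_{n-1}}_{\sigma,2,s}^2\,ds,
\]
a straightforward induction yields
\[
\norm{u_{n+1}-u_n}_{\sigma,2,T}^2 \le \frac{(C\xi^2 T)^n}{n!}\,\norm{u_1-u_0}_{\sigma,2,T}^2.
\]
The bound on $\norm{u_1}_{\sigma,2,T}=\norm{\A 0}_{\sigma,2,T}$ comes directly from Proposition~\ref{prop11}, and since $\sum_n ((C\xi^2 T)^n/n!)^{1/2}<\infty$ almost surely, the telescoping sums give a Cauchy sequence $(u_n)$ in $\mathcal{B}^{\sigma,2}$. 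Denoting its limit by $u$, one more application of Proposition~\ref{prop12} shows $\A u_n\to \A u$ in $\mathcal{B}^{\sigma,2}$, so passing to the limit in $u_{n+1}=\A u_n$ yields $u=\A u$, i.e.\ $u$ satisfies \eqref{sm}.

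For uniqueness, suppose $u$ and $\tilde u$ are two mild solutions in $\mathcal{B}^{\sigma,2}([0,T];L^2(\R))$. Then $u=\A u$ and $\tilde u=\A \tilde u$, so Proposition~\ref{prop12} gives
\[
\norm{u-\tilde u}_{\sigma,2,t}^2 \le C\xi^2 \int_0^t \norm{u-\tilde u}_{\sigma,2,s}^2\,ds
\quad\text{for all }t\in[0,T].
\]
Since $s\mapsto \norm{u-\tilde u}_{\sigma,2,s}^2$ is finite and non-decreasing in $s$, Gronwall's lemma applied pathwise forces $\norm{u-\tilde u}_{\sigma,2,T}=0$ almost surely, proving $u=\tilde u$.

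The main technical hurdle has already been absorbed into Propositions~\ref{prop11} and~\ref{prop12}; the only delicate point remaining is that the constant multiplying the $\norm{\,\cdot\,}_{\sigma,2}$-norms is the random variable $\xi^2$ rather than a deterministic constant. This is handled cleanly by fixing $\omega$ in the a.s.\ set $\{\xi<\infty\}$ and carrying out the Picard/Gronwall arguments entirely pathwise, so that the resulting $u$ is measurable and the required a.s.\ statements follow.
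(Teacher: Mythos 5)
Your proposal is correct and follows essentially the same route as the paper: Picard iteration $u_0\equiv0$, $u_{n+1}=\A u_n$, membership in $\mathcal{B}^{\sigma,2}$ via Proposition~\ref{prop11}, the iterated Gronwall-type bound $(C\xi^2 T)^n/n!$ from Proposition~\ref{prop12} to get a Cauchy sequence, passage to the limit in $u_{n+1}=\A u_n$, and pathwise Gronwall for uniqueness. The only cosmetic difference is that the paper evaluates $\norm{u_1-u_0}^2_{\sigma,2,\cdot}=C\xi^2$ explicitly inside the iterated integrals rather than factoring it out, which changes nothing.
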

\begin{proof}
\underline{Existence.}
We define the following sequence of random processes $u_p\colon [0,T]\to L^2(\R)$:
\[
u_0\equiv0,
\quad
u_{p+1}=\A u_p,\: p\ge1.
\]

Reasoning by induction and using Proposition \ref{prop11}, we easily get that
$u_p \in \mathcal{B}^{\sigma,2}\left([0,T];L^2(\R) \right)$ a.\,s. for every $p \ge 0$.
By Proposition~\ref{prop12}, for any $p\ge1$,
\[
\norm{u_{p+1}-u_p}_{\sigma,2,T}^2
=\left\|\A u_p - \A u_{p-1} \right\|^2_{\sigma,2,T}
\leq\,C\xi^2\,\int_0^T \left\|u_p-u_{p-1}\right\|^2_{\sigma,2,s}\,ds.
\]
By induction, we get 
\[
\norm{u_{p+1}-u_p}_{\sigma,2,t}^2
\leq\,C^p\xi^{2p}\,\int_0^T\int_0^{s_1}\dots\int_0^{s_{p-1}}
\left\|u_1-u_0\right\|^2_{\sigma,2,s_p}\,ds_p\dots ds_2\,ds_1.
\]
Since $u_0\equiv0$, we see that by Proposition~\ref{prop11},
\[
\left\|u_1-u_0\right\|^2_{\sigma,2,s_p}
=\left\|u_1\right\|^2_{\sigma,2,s_p}
=\left\|\A u_0\right\|^2_{\sigma,2,s_p}
\le C \xi^2 
\left(\norm{u_0}_{\sigma,2,s_p}^2+1\right)
=C \xi^2.
\]
Hence,
\[
\norm{u_{p+1}-u_p}_{\sigma,2,T}^2
\leq\left(C\xi^2\right)^{p+1}\,\int_0^T\int_0^{s_1}\dots\int_0^{s_{p-1}}
ds_p\dots ds_2\,ds_1
= \left(C\xi^2\right)^{p+1} \frac{T^p}{p!}.
\]
Then for $m>n\ge0$ we get
\begin{align*}
\norm{u_m-u_n}_{\sigma,2,t}
&=\norm{\sum_{p=n}^{m-1}(u_{p+1}-u_p)}_{\sigma,2,t}
\le\sum_{p=n}^{m-1}\norm{u_{p+1}-u_p}_{\sigma,2,t}
\\
&\le\sum_{p=n}^{m-1}\left(\frac{\left(C\xi^2\right)^{p+1}T^p}{p!}\right)^{1/2}
\to 0
\end{align*}
a.\,s. as $m,n\to\infty$.

Therefore, $\{u_{p},p\geq 0\}$ is a Cauchy sequence in $\mathcal{B}^{\sigma,2}\left([0,T];L^2(\R) \right)$ a.\,s.
Then there exists a process $u_\infty \in \mathcal{B}^{\sigma,2}\left([0,T];L^2(\R) \right)$
such that
\begin{equation}\label{eq:lim}
\norm{u_p-u_\infty}_{\sigma,2,T}\to0\quad \text{a.\,s., as }p\to\infty.
\end{equation}

Now we prove that $u_\infty$ is a mild solution.
For any $p\ge0$,
\begin{align*}
\norm{u_\infty-\A u_\infty}_{\sigma,2,T}^2
&\le 2\norm{u_\infty-u_{p+1}}_{\sigma,2,T}^2 + 2\norm{u_{p+1}-\A u_\infty}_{\sigma,2,T}^2
\\
&= 2\norm{u_\infty-u_{p+1}}_{\sigma,2,T}^2 + 2\norm{\A u_p-\A u_\infty}_{\sigma,2,T}^2
\\
&\le 2\norm{u_\infty-u_{p+1}}_{\sigma,2,T}^2 + C\xi^2\,\int_0^T\norm{u_p-u_\infty}_{\sigma,2,s} \,ds
\\
&\le 2\norm{u_\infty-u_{p+1}}_{\sigma,2,T}^2
+ C\xi^2 T \norm{u_p-u_\infty}_{\sigma,2,T},
\end{align*}
a.\,s., by Proposition~\ref{prop12}.
Letting $p\to\infty$ and taking into account \eqref{eq:lim}, we get
that $u_\infty=\A u_\infty$ a.\,s.
Hence, $u_\infty$ is a mild solution.

\underline{Uniqueness.}
Let $u$ and $\tilde u$ be two mild solutions.
Then $u=\A u$ and $\tilde u =\A\tilde u$ a.\,s., and for all $t\in[0,T]$,
\[
\left\| u - \tilde{u} \right\|^2_{\sigma,2,t}
=\left\|\A u - \A\tilde{u} \right\|^2_{\sigma,2,t}
\leq\,C\xi^2\,\int_0^t \left\|u-\tilde{u}\right\|^2_{\sigma,2,s}\,ds
\qquad\text{a.\,s.},
\]
by Proposition \ref{prop12}.
Then, by Gronwall's lemma,
$\left\| u - \tilde{u} \right\|^2_{\sigma,2,T}=0$ a.\,s., which means the uniqueness of the mild solution.
\end{proof}

\appendix

\section{}
In this appendix we establish upper bounds for $L^2$-norms of the functions $\varsigma_{j,t}(u)$ and $\varsigma^*_{j,t,s}(u)$ and their differences (see \eqref{eq:zeta} and \eqref{eq:zeta*} for the definitions of that functions).
The results of the appendix are used in the proofs of Propositions~\ref{prop11} and \ref{prop12}.

\begin{lemma}\label{lem8} 
Let  $\{u(t,\cdot),t\in[0,T]\}$ be an $L^2(\R)$-valued random process.
Then 
\begin{enumerate}[(i)]
\item
for all $0<s<t<T$,
\[
\sup_{j\in \mathbb{N}}\left\| \varsigma_{j,t}(u)(s,\cdot)\right\|_2 \leq \, C\, \left (\left\|u(s,\cdot)\right\|_2+1\right );
\]

\item
for all $0<r<s<t<T$, and for any $\delta\in(\frac13,1)$,
\begin{multline*}
\sup_{j\in \mathbb{N}}\left\| \varsigma_{j,t}(u)(s,\cdot)- \varsigma_{j,t}(u)(r,\cdot) \right\|_2
\\*
\leq \, C\left\{\left\|u(s,\cdot)-u(r,\cdot) \right\|_2+(t-s)^{-\frac{\delta}{2}}\,(s-r)^{\frac{\delta}{2}}\left(\left\|u(r,\cdot)\right\|_2+1\right)\right\}.
\end{multline*}
\end{enumerate}
\end{lemma}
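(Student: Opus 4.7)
The plan is to view $\varsigma_{j,t}(u)(s,\cdot)$ as the image of the function $y\mapsto h(u(s,y))\,e_j(y)$ under the integral operator with kernel $G(t-s,\cdot,\cdot)$, and estimate its $L^2(\R)$-norm by Schur's test, feeding in the $L^1$-bounds on $G$ and its time-differences provided by Corollaries~\ref{cor2} and~\ref{cor4}. Throughout we exploit that $h$ is affine together with the two consequences of assumption \eqref{ej}: $M:=\sup_j\norm{e_j}_\infty<\infty$ and $\norm{e_j}_2=1$ (orthonormality).

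For (i), split
\[
\varsigma_{j,t}(u)(s,x)=h_1\int_\R G(t-s,x,y)\,u(s,y)\,e_j(y)\,dy+h_2\int_\R G(t-s,x,y)\,e_j(y)\,dy.
\]
Corollary~\ref{cor2} says that both $\sup_x\int_\R \abs{G(t-s,x,y)}\,dy$ and $\sup_y\int_\R \abs{G(t-s,x,y)}\,dx$ are bounded by a constant, so by Schur's test the integral operator with kernel $G(t-s,\cdot,\cdot)$ is bounded on $L^2(\R)$ uniformly in $s,t$. Using $\norm{u(s,\cdot)e_j}_2\le M\norm{u(s,\cdot)}_2$ for the first piece and $\norm{h_2 e_j}_2=\abs{h_2}$ for the second yields (i), uniformly in $j$.

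For (ii), the natural decomposition is
\begin{align*}
\varsigma_{j,t}(u)(s,x)-\varsigma_{j,t}(u)(r,x)
&=h_1\int_\R G(t-s,x,y)[u(s,y)-u(r,y)]e_j(y)\,dy\\
&\quad+\int_\R\bigl[G(t-s,x,y)-G(t-r,x,y)\bigr]h(u(r,y))\,e_j(y)\,dy
=:I_1(x)+I_2(x).
\end{align*}
The term $I_1$ is handled by the same Schur argument as in (i), contributing $C\,\norm{u(s,\cdot)-u(r,\cdot)}_2$. For $I_2$, introduce the difference kernel $K(x,y):=G(t-s,x,y)-G(t-r,x,y)$ and estimate the two Schur integrals asymmetrically: Corollary~\ref{cor2} yields the crude uniform bound $\sup_x\int_\R\abs{K(x,y)}\,dy\le 2C_1(a_1,a_2)$, whereas Corollary~\ref{cor4} gives the sharp bound $\sup_y\int_\R\abs{K(x,y)}\,dx\le C(t-s)^{-\delta}(s-r)^{\delta}$ for $\delta\in(\tfrac13,1)$. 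Schur's test then produces
\[
\norm{I_2}_2\le\sqrt{2C_1\cdot C(t-s)^{-\delta}(s-r)^{\delta}}\;\norm{h(u(r,\cdot))e_j}_2
\le C(t-s)^{-\delta/2}(s-r)^{\delta/2}\bigl(\norm{u(r,\cdot)}_2+1\bigr),
\]
where the last factor is bounded exactly as in (i). Summing the bounds on $\norm{I_1}_2$ and $\norm{I_2}_2$ and taking the supremum over $j$ gives (ii).

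The main obstacle is obtaining the exponents $\delta/2$ (rather than $\delta$) on \emph{both} $t-s$ and $s-r$ in (ii); these are the correct exponents for the outer time-integrals in Propositions~\ref{prop11}--\ref{prop12} to converge. The trick is precisely the asymmetric application of Schur's test described above, so that the factor $\sqrt{\cdot}$ redistributes the time-decay from Corollary~\ref{cor4} evenly between $t-s$ and $s-r$.
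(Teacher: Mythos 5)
Your proposal is correct and follows essentially the same route as the paper: the paper's argument is precisely Schur's test unrolled (Cauchy--Schwarz against the measure $\abs{K(x,y)}\,dy$, then Fubini), with the identical decomposition $I_1+I_2$ in part (ii) and the same asymmetric pairing of the crude bound from Corollary~\ref{cor2} with the sharp bound from Corollary~\ref{cor4} to split the exponent $\delta$ into $\delta/2+\delta/2$. The only cosmetic difference is that you invoke Schur's test by name (and swap which of the two Schur integrals receives the sharp estimate, which is immaterial since Corollary~\ref{cor4} bounds both).
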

\begin{proof}
Let us start with the assertion $(i)$ and produce the following transformations:
\[
\left\| \varsigma_{j,t}(u)(s,\cdot)\right\|_2^2
= \int_\R \left|\varsigma_{j,t}(u)(s,x)\right|^2\,dx
=\int_\R \left|\int_\R G(t-s, x, y)\,h(u(s,y))\;e_j(y)\,dy\right|^2\,dx.
\]
It follows from the H\"older inequality that
\[
\left\| \varsigma_{j,t}(s,\cdot)\right\|_2^2 \leq \int_\R\left( \int_\R \left|G(t-s, x, y)\right|dy \int_\R \left|G(t-s, x, y)\right| \left|h(u(s,y))\right| ^2|e_j(y)|^2dy\right)dx.
\]
Corollary \ref{cor2} implies that
\[
\left\| \varsigma_{j,t}(u)(s,\cdot)\right\|_2^2 \leq C \int_\R\left(  \int_\R \left|G(t-s, x, y)\right|\, \left|h(u(s,y))\right| ^2|e_j(y)|^2\,dy\right)\,dx.
\]
By \eqref{lin}, for all $z\in\R$,
\[
\abs{h(z)}^2\le  C\left(z^2 + 1\right).
\]
Therefore, applying Fubini's Theorem, we get
\begin{align*}
\left\| \varsigma_{j,t}(u)(s,\cdot)\right\|_2^2
&\leq C\int_{\R^2} \left|G(t-s, x, y)\right|\, \left(\left|u(s,y)\right|^2+1\right)|e_j(y)|^2\,dx\,dy
\\
&=  C \int_{\R} \left(\left|u(s,y)\right|^2+1\right)|e_j(y)|^2 \int_{\R} \left|G(t-s, x, y)\right|\,dx \,dy.
\end{align*}
Using again Corollary  \ref{cor2},  we get
\[
\left\| \varsigma_{j,t}(u)(s,\cdot)\right\|_2^2
\leq  C  \int_{\R} \left(\left|u(s,y)\right|^2+1\right)|e_j(y)|^2\,dy.
\]
However,
\begin{align}
\int_{\R} \left(\left|u(s,y)\right|^2+1\right)|e_j(y)|^2\,dy
&\le C  \left(\sup_{j\in\mathbb{N}}\left\|e_j\right\|_\infty^2 \norm{u(s,\cdot)}_2^2+\norm{e_j}_2^2 \right)
\notag\\
&\le C\left(\norm{u(s,\cdot)}_2^2+1 \right),
\label{eq:bound-lg}
\end{align}
since $\sup_{j\in\mathbb{N}}\left\|e_j\right\|_\infty^2$ is bounded by (\ref{ej}), and $\norm{e_j}_2=1$ due to orthonormality.
Consequently, the assertion $(i)$ follows.

Now, let us prove assertion $(ii)$. On the one hand, obviously,
 \[\left\| \varsigma_{j,t}(u)(s,\cdot)- \varsigma_{j,t}(u)(r,\cdot) \right\|_2:=\left(\int_\R \left|\varsigma_{j,t}(u)(s,x)-\varsigma_{j,t}(u)(r,x)\right|^2\,dx\right)^{1/2}.
\]
On the other hand,
\begin{align}
\MoveEqLeft
\left|\varsigma_{j,t}(u)(s,x)-\varsigma_{j,t}(u)(r,x)\right|
\notag\\*
&= \left|\int_\R\big[ G(t-s, x, y)\,h(u(s,y))\,e_j(y)- G(t-r, x, y)\,h(u(r,y))\,e_j(y)\big]\,dy\right|
\notag\\
&= \left|\int_\R G(t-s, x, y)\,\Big[h(u(s,y))-h(u(r,y))\Big]\,e_j(y)\,dy\right.
\notag\\*
&\quad+\left.\int_\R \Big[G(t-s, x, y)-G(t-r, x, y)\Big]\,h(u(r,y))\,e_j(y)\,dy\right|
\notag\\
&\leq \sup_{j\in\mathbb{N}}\left\|e_j\right\|_\infty
 \int_\R \left|G(t-s, x, y)\right|\,\left|h(u(s,y))-h(u(r,y))\right|\,dy
\notag\\*
&\quad+\int_\R \left|G(t-s, x, y)-G(t-r, x, y)\right|\,\left|h(u(r,y))\right| \abs{e_j(y)}\,dy.
\label{eq:l7-1}
\end{align}
By Holder's inequality, we obtain from \eqref{eq:l7-1} that
\begin{align}
\MoveEqLeft
\left|\varsigma_{j,t}(u)(s,x)-\varsigma_{j,t}(u)(r,x)\right|^2
\leq C \left[\int_\R \left|G(t-s, x, y)\right|\left|h(u(s,y))-h(u(r,y))\right|dy\right]^2
\notag\\*
&\quad+ C\left[\int_\R \left|G(t-s, x, y)-G(t-r, x, y)\right|\left|h(u(r,y))\right|\abs{e_j(y)}\,dy \right]^2
\notag\\
&\leq\,C \int_\R \left|G(t-s, x, y)\right|\,dy\int_\R\left|G(t-s, x, y)\right|\left|h(u(s,y))-h(u(r,y))\right|^2\,dy
\notag\\*
&\quad+\,C\int_\R \left|G(t-s, x, y)-G(t-r, x, y)\right|\,dy\notag\\*
&\quad\quad\times\int_\R  \left|G(t-s, x, y)-G(t-r, x, y)\right|\left|h(u(r,y))\right|^2\abs{e_j(y)}^2\,dy.
\label{eq:l7-2}
\end{align}
Applying  Corollaries  \ref{cor2}  and  \ref{cor4}, we get from \eqref{eq:l7-2} the following bounds
\begin{align*}
\MoveEqLeft[1]
\left|\varsigma_{j,t}(u)(s,x)-\varsigma_{j,t}(u)(r,x)\right|^2
\leq\,C \int_\R\left|G(t-s, x, y)\right|\left|h(u(s,y))-h(u(r,y))\right|^2\,dy
\\*
&\quad+ C(t-s)^{-\delta}\,(s-r)^\delta\int_\R  \left|G(t-s, x, y)-G(t-r, x, y)\right|\left|h(u(r,y))\right|^2\abs{e_j(y)}^2\,dy
\\
&\leq C\int_\R \left|G(t-s, x, y)\right|\,\left|u(s,y)-u(r,y)\right|^2\,dy
+C(t-s)^{-\delta}\,(s-r)^\delta
\\*
&\quad\times\int_\R  \left|G(t-s, x, y)-G(t-r, x, y)\right|\left(\left|u(r,y)\right|^2+1\right)\abs{e_j(y)}^2\,dy.
\end{align*}
Applying Fubini's  theorem, we get
\begin{align*}
\MoveEqLeft[1]
\int_\R\left|\varsigma_{j,t}(u)(s,x)-\varsigma_{j,t}(u)(r,x)\right|^2\,dx
\\
&\leq C\,\,\int_\R\left|u(s,y)-u(r,y)\right|^2\int_\R \left|G(t-s, x, y)\right|\,dx\,dy
\\*
&\quad+C(t-s)^{-\delta}\,(s-r)^\delta  \int_\R\left(\left|u(r,y)\right|^2 +1\right) \abs{e_j(y)}^2
\\
&\quad\quad\times\left(\int_\R  \left|G(t-s, x, y)-G(t-r, x, y)\right|\,dx\right)\,dy.
\\
&\leq C\,\left\|u(s,\cdot)-u(r,\cdot)\right\|^2_2
+ C(t-s)^{-\delta}\,(s-r)^\delta
\\
&\quad\times
\int_\R\left(\left|u(r,y)\right|^2 +1\right) \abs{e_j(y)}^2\left(\int_\R  \left|G(t-s, x, y)\right|dx+\int_\R  \left|G(t-r, x, y)\right|dx  \right)dy.
\end{align*}
Using again Corollary  \ref{cor2} and the bound \eqref{eq:bound-lg},  we get
\begin{align*}
\MoveEqLeft
\int_\R\left|\varsigma_{j,t}(u)(s,x)-\varsigma_{j,t}(u)(r,x)\right|^2\,dx
\leq C\,\left\|u(s,\cdot)-u(r,\cdot)\right\|^2_2
\\*
&\quad+C(t-s)^{-\delta}\,(s-r)^\delta  \int_\R\left(\left|u(r,y)\right|^2 +1\right) \abs{e_j(y)}^2\,dy
\\
&\leq C\,\Big[\left\|u(s,\cdot)-u(r,\cdot)\right\|^2_2+\,(t-s)^{-\delta}\,(s-r)^\delta\,\left(\left\| u(r,\cdot)\right\|^2_2+1\right) \Big].
\end{align*}
Consequently,
\begin{multline*}
\left\|\varsigma_{j,t}(u)(s,\cdot)-\varsigma_{j,t}(u)(r,\cdot)\right\|_2
\\*
\leq C\,\Big[\left\|u(s,\cdot)-u(r,\cdot)\right\|_2+(t-s)^{-\frac{\delta}{2}}\,(s-r)^{\frac{\delta}{2}}\,\left(\left\| u(r,\cdot)\right\|_2+1\right)\Big].
\end{multline*}
Hence, the statement $(ii)$ is obtained.
\end{proof}
\begin{lemma}\label{lem9}
Let  $\{u(t,\cdot),t\in[0,T]\}$ be an $L^2(\R)$-valued random process.
Then  
\begin{enumerate}[(i)]
\item
for all $0<v<s<t<T$ and $\delta  \in(\frac13,1)$,
\[
\sup_{j\in \mathbb{N}}\left\| \varsigma_{j,t}(u)(v,\cdot)- \varsigma_{j,s}(u)(v,\cdot) \right\|_2 \leq \,C(s-v)^{-\frac{\delta}{2}}\,(t-s)^{\frac{\delta}{2}}\left(\left\|u(v,\cdot)\right\|_2+1\right),
\]
\item
for all $0<r<v<s<t<T$ and $\delta \in(\frac13,1), \delta' \in(\frac15,1)$, 
\begin{align*}
\MoveEqLeft
\sup_{j\in \mathbb{N}}\left\| \varsigma_{j,t}(u)(v,\cdot)- \varsigma_{j,s}(u)(v,\cdot)-\varsigma_{j,t}(u)(r,\cdot)+ \varsigma_{j,s}(u)(r,\cdot) \right\|_2
\\
&\leq C \Bigl( (t-s)^{\frac{\delta}{2}} (s-v)^{-\frac{\delta}{2}}\,\left\| u(v,\cdot)-u(r,\cdot)\right\|_2
\\
&\quad+  (t-s)^{\frac{\delta'}{2}} (s-v)^{-\delta'} (v-r)^{\frac{\delta'}{2}} \left(\left\|u(r,\cdot)\right\|_2+1\right)   \Bigr) . 
\end{align*}
\end{enumerate}
\end{lemma}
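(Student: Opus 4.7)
The plan is to prove both parts by the same template as the proof of Lemma~\ref{lem8}(ii): rewrite the quantity inside the norm as an integral against $h(u)\,e_j$ with a kernel built from differences of $G$, apply Cauchy--Schwarz in $y$, then bound one of the resulting $G$-integrals trivially (using Corollary~\ref{cor2}) and the other one (after integration in $x$ and Fubini) by the refined space--time estimates of Corollary~\ref{cor4} or Lemma~\ref{lem6}. Throughout one uses the affine form of $h$ in \eqref{lin} (which gives $|h(z)|^2 \le C(z^2+1)$ and $|h(z)-h(w)|^2 \le C|z-w|^2$), together with $\sup_j\|e_j\|_\infty < \infty$ and $\|e_j\|_2 = 1$.

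For (i), I would write
\[
\varsigma_{j,t}(u)(v,x)-\varsigma_{j,s}(u)(v,x) = \int_\R \bigl[G(t-v,x,y)-G(s-v,x,y)\bigr]\,h(u(v,y))\,e_j(y)\,dy,
\]
apply Cauchy--Schwarz in $y$, and use Corollary~\ref{cor2} together with the triangle inequality to bound $\int_\R |G(t-v,x,y)-G(s-v,x,y)|\,dy$ by a constant uniformly in $x$. Squaring, integrating in $x$, swapping the order of integration via Fubini, and invoking Corollary~\ref{cor4} in the form $\int_\R |G(t-v,x,y)-G(s-v,x,y)|\,dx \le C(s-v)^{-\delta}(t-s)^\delta$, one obtains a squared bound of order $(s-v)^{-\delta}(t-s)^\delta(\|u(v,\cdot)\|_2^2+1)$. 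Taking the square root and using $\sqrt{a^2+1}\le a+1$ yields (i).

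For (ii), the crucial algebraic step is the decomposition
\begin{align*}
\MoveEqLeft \varsigma_{j,t}(u)(v,x)-\varsigma_{j,s}(u)(v,x)-\varsigma_{j,t}(u)(r,x)+\varsigma_{j,s}(u)(r,x) \\
&=\int_\R \bigl[G(t-v,x,y)-G(s-v,x,y)\bigr]\bigl[h(u(v,y))-h(u(r,y))\bigr]\,e_j(y)\,dy \\
&\quad+\int_\R \Delta^2 G(x,y)\,h(u(r,y))\,e_j(y)\,dy \eqqcolon I_1(x)+I_2(x),
\end{align*}
where $\Delta^2 G(x,y):=G(t-v,x,y)-G(s-v,x,y)-G(t-r,x,y)+G(s-r,x,y)$. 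For $I_1$, the argument from (i) carries over verbatim, except that the data factor is $|u(v,y)-u(r,y)|^2$, yielding the first term in the claimed bound. For $I_2$, the same Cauchy--Schwarz scheme applies, except that the refined $G$-estimate is now supplied by Lemma~\ref{lem6} (applied with $\sigma=r$ and $\tau=v$, which fits the configuration $r<v<s<t$), producing the factor $C(t-s)^{\delta'}(s-v)^{-2\delta'}(v-r)^{\delta'}$; the trivial bound on the remaining Cauchy--Schwarz factor is again Corollary~\ref{cor2} applied to each of the four terms composing $\Delta^2 G$. Combining the bounds for $I_1$ and $I_2$ gives (ii).

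The main obstacle is the bookkeeping: writing the right decomposition in (ii) so that the $G$-differences appearing match precisely the configurations handled by Corollary~\ref{cor4} and Lemma~\ref{lem6}, and tracking the two independent exponent ranges $\delta\in(1/3,1)$ (from Corollary~\ref{cor4}) and $\delta'\in(1/5,1)$ (from Lemma~\ref{lem6}). No new analytic input beyond the earlier lemmas is required.
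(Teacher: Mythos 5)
Your proposal is correct and follows essentially the same route as the paper's proof: the same decomposition into the two terms $I_1$ and $I_2$ (the paper's $K_2$ and $K_1$), the same Cauchy--Schwarz scheme with Corollary~\ref{cor2} for the crude factor, Corollary~\ref{cor4} for the single time increment, and Lemma~\ref{lem6} (with $\sigma=r$, $\tau=v$) for the double difference, with the exponent ranges $\delta>\frac13$ and $\delta'>\frac15$ attributed exactly as in the paper.
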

\begin{proof}
$(i)$
First, we can produce the relations
\begin{align*}
\MoveEqLeft
\left|\varsigma_{j,t}(u)(v,x)-\varsigma_{j,s}(u)(v,x)\right|
\\*
&= \left|\int_\R\big[ G(t-v, x, y)\,h(u(v,y))\,e_j(y)- G(s-v, x, y)\,h(u(v,y))\,e_j(y)\big]\,dy\right|
\\
&= \left|\int_\R \Big[G(t-v, x, y)-G(s-v, x, y)\Big]\,h(u(v,y))\,e_j(y)\,dy\right|.
\end{align*}
By using Holder's inequality, we obtain
\begin{multline*}
\left|\varsigma_{j,t}(u)(v,x)-\varsigma_{j,s}(u)(v,x)\right|^2
\leq\,C\int_\R \left|G(t-v, x, y)-G(s-v, x, y)\right|\,dy
\\
\times\int_\R  \left|G(t-v, x, y)-G(s-v, x, y)\right|\left|h(u(v,y))\right|^2 \abs{e_j(y)}^2\,dy
\end{multline*}
Then, we can deduce from  Corollary  \ref{cor2}
\begin{align*}
\MoveEqLeft
\left|\varsigma_{j,t}(u)(v,x)-\varsigma_{j,s}(u)(v,x)\right|^2
\\*
&\leq\,C\int_\R  \left|G(t-v, x, y)-G(s-v, x, y)\right|\left|h(u(v,y))\right|^2 \abs{e_j(y)}^2\,dy
\\
&\leq\,C\int_\R  \left|G(t-v, x, y)-G(s-v, x, y)\right|\,\left(\left|u(v,y)\right|^2+1\right) \abs{e_j(y)}^2\,dy.
\end{align*}
Hence, it follows from  Fubini theorem that
\begin{multline*}
\int_\R\left|\varsigma_{j,t}(u)(v,x)-\varsigma_{j,s}(u)(v,x)\right|^2\,dx
\\*
\leq\,C  \int_\R\left(\left|u(v,y)\right|^2+1\right) \abs{e_j(y)}^2\left(\int_\R  \left|G(t-v, x, y)-G(s-v, x, y)\right|\,dx\right)\,dy.
\end{multline*}
By Corollary $\ref{cor4}$, we obtain
\[
\int_\R\left|\varsigma_{j,t}(u)(v,x)-\varsigma_{j,s}(u)(v,x)\right|^2 dx
\leq C(s-v)^{-\delta}\,(t-s)^\delta  \!\int_\R\left(\left|u(v,y)\right|^2+1\right) \abs{e_j(y)}^2\! dy.
\]
Consequently, by \eqref{eq:bound-lg},
\[
\left\|\varsigma_{j,t}(u)(v,\cdot)-\varsigma_{j,s}(u)(v,\cdot)\right\|_2\leq \,C\,\,(s-v)^{-\frac{\delta}{2}}\,(t-s)^{\frac{\delta}{2}}\,\left (\left\| u(v,\cdot)\right\|_2 + 1\right).
\]

$(ii)$
For the second statement, we have
\begin{align*}
\MoveEqLeft[1]
\left\| \varsigma_{j,t}(u)(v,\cdot)- \varsigma_{j,s}(u)(v,\cdot)-\varsigma_{j,t}(u)(r,\cdot)+ \varsigma_{j,s}(u)(r,\cdot) \right\|_2^2
\\*
&=\int_\R \left| \varsigma_{j,t}(u)(v,x)- \varsigma_{j,s}(u)(v,x)-\varsigma_{j,t}(u)(r,x)+ \varsigma_{j,s}(u)(r,x)\right|^2\,dx
\\
&=\int_\R \Big|\int_\R \Big[ G(t-v, x, y)-G(s-v,x,y)\Big]\,h(u(v,y))\,e_j(y)\,dy
\\*
&\quad -\int_\R \Big[ G(t-r, x, y)-G(s-r, x, y)\Big]\,h(u(r,y))\,e_j(y)\,dy\Big|^2\,dx
\\
&=\int_\R \Biggl|\int_\R \Big[ G(t-v, x, y)-G(s-v,x,y)-G(t-r, x, y)+G(s-r, x, y)\Big]
\\*
&\quad\times h(u(r,y))\,e_j(y)\,dy
 +\int_\R \Big[ G(t-v, x, y)-G(s-v, x, y)\Big]
 \\
&\quad\times \Big[h(u(v,y))-h(u(r,y))\Big]\,e_j(y)\,dy\Biggr|^2\,dx
\\
&\leq C\int_\R \Biggl|\int_\R \Big[ G(t-v, x, y)-G(s-v,x,y)-G(t-r, x, y)+G(s-r, x, y)\Big]
\\
&\quad\times h(u(r,y))\,e_j(y)\,dy\Biggr|^2\,dx
+C\int_\R\Biggl|\int_\R \Big[ G(t-v, x, y)-G(s-v, x, y)\Big]
\\
&\quad\times\Big[h(u(v,y))-h(u(r,y))\Big]\,e_j(y)\,dy\Biggr|^2\,dx
= K_1+K_2.
\end{align*}
In order to get the upper bound for $K_1,$ we apply  Holder's inequality and produce that
\begin{align*}
K_1&=\int_\R \biggl|\int_\R \Big[ G(t-v, x, y)-G(s-v,x,y)-G(t-r, x, y)+G(s-r, x, y)\Big]
\\*
&\quad\times h(u(r,y))\,e_j(y)\,dy\biggr|^2\,dx
\\
&\leq\,\int_\R \left(\int_\R \Big| G(t-v, x, y)-G(s-v,x,y)-G(t-r, x, y)+G(s-r, x, y)\Big|\,dy\right)
\\
&\quad\times\biggl(\int_\R \abs{ G(t-v, x, y)-G(s-v,x,y)-G(t-r, x, y)+G(s-r, x, y)}
\\
&\quad\quad\times \abs{h(u(r,y))}^2\left|e_j(y)\right|^2  dy\biggr)dx
\\
&\leq\, C \int_\R \left(\int_\R \left| G(t-v, x, y)-G(s-v,x,y)-G(t-r, x, y)+G(s-r, x, y)\right|\,dy\right)
\\
&\quad\times\biggl(\int_\R \abs{G(t-v, x, y)-G(s-v,x,y)-G(t-r, x, y)+G(s-r, x, y)}
\\
&\quad\quad\times(\abs{u(r,y)}^2+1) \abs{e_j(y)}^2 dy\biggr)dx
\end{align*}
By Corollary \ref{cor2},
\begin{multline*}
K_1\leq C
\int_{\R^2}\!\abs{G(t-v, x, y)-G(s-v,x,y)-G(t-r, x, y)+G(s-r, x, y)}
\\
\times(\abs{u(r,y)}^2+1) \abs{e_j(y)}^2dy\,dx
\end{multline*}
 Lemma $\ref{lem6}$ and  Fubini's theorem allow us to get
\begin{align*}
K_1 &\leq\, C
\int_{\R}\left(\left|u(r,y)\right|^2+1\right) \abs{e_j(y)}^2
\\*
&\quad\times\int_\R\left[ G(t-v, x, y)-G(s-v,x,y)-G(t-r, x, y)+G(
s-r,x,y)\right]\,dx\,dy
\\
&\leq\,C\,(t-s)^{\delta'}(s-v)^{-2\delta'}(v-r)^{\delta'} \int_{\R}\left(\left|u(r,y)\right|^2+1\right) \abs{e_j(y)}^2\,dy.
\end{align*}
Therefore, by \eqref{eq:bound-lg},
\[
K_1 \leq\,C\, (t-s)^{\delta'}(s-v)^{-2\delta'}(v-r)^{\delta'}\left(\left\|u(r,\cdot) \right\|^2_2+1\right).
\]
In order to get the upper bound for $K_2,$ we apply again the  H\"older's inequality and get that
\begin{align*}
K_2&=\int_\R\Big|\int_\R \Big[ G(t-v, x, y)-G(s-v, x, y)\Big]\,\Big[h(u(v,y))-h(u(r,y))\Big]\,e_j(y)\,dy\Big|^2\,dx
\\
&\leq\,\int_\R\left(\int_\R \Big| G(t-v, x, y)-G(s-v, x, y)\Big|\,dy \right)
\\
&\;\;\times \left(\int_\R\Big| G(t-v, x, y)-G(s-v, x, y)\Big|\Big[h(u(v,y))-h(u(r,y))\Big]^2\left|e_j(y)\right|^2dy\right)dx
\\
&\leq C \sup_{j\in\mathbb{N}}\left\|e_j\right\|^2_\infty\,\int_\R\left(\int_\R \Big| G(t-v, x, y)-G(s-v, x, y)\Big|\,dy \right)
\\
&\;\;\times
 \left(\int_\R\Big| G(t-v, x, y)-G(s-v, x, y)\Big|\Big|u(v,y)-u(r,y)\Big|^2dy\right)dx.
\end{align*}
By Corollary \ref{cor2},
\[
K_2\leq\,C\,\int_{\R^2}\Big| G(t-v, x, y)-G(s-v, x, y)\Big|\Big|u(v,y)-u(r,y)\Big|^2\,dy\,dx
\]
Therefore, applying Corollary \ref{cor4} and    Fubini's theorem we get that
\begin{align*}
K_2 &\leq\,C\, (t-s)^{\delta}\,(s-v)^{-\delta}\,\int_{\R}\Big|u(v,y)-u(r,y)\Big|^2\,dy
\\
&=\,C\, (t-s)^{\delta}\,(s-v)^{-\delta}\,\left\| u(v,\cdot)-u(r,\cdot) \right\|_2^2
\\
&\leq\,C\, (t-s)^{\delta}\,(s-v)^{-\delta}\,\left\| u(v,\cdot)-u(r,\cdot) \right\|_2^2
\end{align*}
Consequently,
\begin{align*}
\MoveEqLeft
\left\| \varsigma_{j,t}(u)(v,\cdot)- \varsigma_{j,s}(u)(v,\cdot)-\varsigma_{j,t}(u)(r,\cdot)+ \varsigma_{j,s}(u)(r,\cdot) \right\|_2^2
\\*
&\leq C \biggl( (t-s)^{\delta} (s-v)^{-\delta}\,\left\| u(v,\cdot)-u(r,\cdot) \right\|_2^2 
\\*
&\quad+ (t-s)^{\delta'}(s-v)^{-2\delta'}(v-r)^{\delta'}  \left(\left\|u(r,\cdot) \right\|^2_2+1\right ) \biggr) .
\end{align*}
\end{proof}

\begin{lemma}\label{lem9b}
Let  $\{u(t,\cdot),t\in[0,T]\}$ and $\{\tilde u(t,\cdot),t\in[0,T]\}$ be two $L^2(\R)$-valued random processes.
Then  
\begin{enumerate}[(i)]
\item
for all $0<s<t<T$,
\[
\sup_{j\in \mathbb{N}}\left\| \varsigma_{j,t}(u)(s,\cdot)- \varsigma_{j,t}(\tilde{u})(s,\cdot) \right\|_2 \leq \,C\, \left\|u(s,\cdot)-\tilde{u}(s,\cdot)\right\|_2,
\]
\item
for all $0<r<s<t<T$ and for every $\delta\in (\frac13,1)$,
\begin{multline*}
\sup_{j\in \mathbb{N}}\left\| \varsigma_{j,t}(u)(s,\cdot)- \varsigma_{j,t}(\tilde{u})(s,\cdot)-\varsigma_{j,t}(u)(r,\cdot)+ \varsigma_{j,t}(\tilde{u})(r,\cdot) \right\|_2
\\
\leq C (t-s)^{-\delta}\,(s-r)^{\delta}\,\left\| u(r,\cdot)-\tilde{u}(r,\cdot)\right\|_2+\,C \left\|u(s,\cdot)-\tilde{u}(s,\cdot)-u(r,\cdot)+\tilde{u}(r,\cdot)   \right\|_2.
\end{multline*}
\end{enumerate}
\end{lemma}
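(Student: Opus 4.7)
The plan is to exploit the crucial fact that $h$ is affine: writing $\phi(s,y)\coloneqq u(s,y)-\tilde u(s,y)$, one has $h(u(s,y))-h(\tilde u(s,y))=h_1\,\phi(s,y)$, so the additive constant $h_2$ disappears from every difference and the analysis reduces to estimating linear functionals of $\phi$. This is what makes the bounds free of any affine term $+1$ on the right-hand side, in contrast with Lemmas~\ref{lem8} and \ref{lem9}. The overall scheme then mirrors the proofs of Lemma~\ref{lem8}(i) and (ii), with Corollary~\ref{cor2} controlling $G$-integrals and Corollary~\ref{cor4} controlling differences of $G$ in the time variable.

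For (i), I would write
\[
\varsigma_{j,t}(u)(s,x)-\varsigma_{j,t}(\tilde u)(s,x)
=h_1\int_\R G(t-s,x,y)\,\phi(s,y)\,e_j(y)\,dy,
\]
apply the Cauchy--Schwarz (H\"older) splitting $|G|=|G|^{1/2}\cdot|G|^{1/2}$, square, integrate in $x$, apply Fubini, and invoke Corollary~\ref{cor2} twice (once in $y$, once in $x$). Using $\sup_j\|e_j\|_\infty<\infty$ from \eqref{ej} finishes the estimate.

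For (ii), the standard ``add and subtract'' gives the decomposition
\begin{align*}
\MoveEqLeft
\varsigma_{j,t}(u)(s,x)-\varsigma_{j,t}(\tilde u)(s,x)-\varsigma_{j,t}(u)(r,x)+\varsigma_{j,t}(\tilde u)(r,x)
\\
&=h_1\int_\R G(t-s,x,y)\bigl[\phi(s,y)-\phi(r,y)\bigr] e_j(y)\,dy
\\
&\quad + h_1\int_\R\bigl[G(t-s,x,y)-G(t-r,x,y)\bigr]\phi(r,y)\,e_j(y)\,dy.
\end{align*}
The first summand is handled exactly as in (i), producing the term $C\|u(s,\cdot)-\tilde u(s,\cdot)-u(r,\cdot)+\tilde u(r,\cdot)\|_2$. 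For the second summand, after the Cauchy--Schwarz splitting, squaring, integration in $x$, and Fubini, I would apply Corollary~\ref{cor4} twice: once for the $y$-integral and once for the $x$-integral of $|G(t-s,x,y)-G(t-r,x,y)|$. Noting that $t-s<t-r$ and that the gap between the two times is $s-r$, Corollary~\ref{cor4} produces a factor $(t-s)^{-\delta}(s-r)^{\delta}$ at each invocation, hence $(t-s)^{-2\delta}(s-r)^{2\delta}$ before taking the square root, which yields the stated bound. Combined with $\sup_j\|e_j\|_\infty<\infty$, this completes the proof.

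The main point requiring care — rather than a deep obstacle — is bookkeeping of the exponent in (ii): one must make sure that Corollary~\ref{cor4} is applied twice under the square so that, after extracting the square root, the exponent is $\delta$ and not $2\delta$. One also has to identify the correct ordering $t-s<t-r$ so that Corollary~\ref{cor4} is used with the smaller time equal to $t-s$, producing the factor $(t-s)^{-\delta}$ rather than $(t-r)^{-\delta}$. Apart from these technical checks, the argument is a direct adaptation of the proof of Lemma~\ref{lem8}, simplified by the affine structure of $h$.
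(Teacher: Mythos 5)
Your proposal is correct and follows essentially the same route as the paper: the same add‑and‑subtract decomposition, the Cauchy--Schwarz splitting $|G|=|G|^{1/2}|G|^{1/2}$ with Fubini, Corollary~\ref{cor2} for the first summand, and a double application of Corollary~\ref{cor4} (with the smaller time $t-s$) giving $(t-s)^{-2\delta}(s-r)^{2\delta}$ under the square for the second. The explicit use of the affine structure $h(u)-h(\tilde u)=h_1(u-\tilde u)$ is exactly what the paper's proof relies on, only stated there implicitly via \eqref{lin}.
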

\begin{proof}
In order to prove $(i)$, we can apply the Cauchy--Schwartz  inequality and Corollary \ref{cor2}, and get the following inequalities:
\begin{align*}
\MoveEqLeft[0]
\left\| \varsigma_{j,t}(u)(s,\cdot)- \varsigma_{j,t}(\tilde{u})(s,\cdot) \right\|_2^2:= \int_\R\left|   \varsigma_{j,t}(u)(s,x)- \varsigma_{j,t}(\tilde{u})(s,x)  \right|^2\,dx
\\
&= \int_\R\left| \int_\R G(t-s, x, y)\left[h(u(s,y))-h(\tilde{u}(s,y)) \right]e_j(y)\,dy\right|^2dx
\\
&\leq C\int_\R \left(\int_\R |G(t-s, x, y)|dy \right)
  \left(\int_\R |G(t-s, x, y)|\left[ h(u(s,y))-h(\tilde{u}(s,y)) \right]^2dy \right) dx
	\\ 
&\leq C \int_\R\left[ h(u(s,y))-h(\tilde{u}(s,y)) \right]^2\left(\int_\R |G(t-s, x, y)|dx \right) dy
	\\
&\leq C\left\|u(s,\cdot)-\tilde{u}(s,\cdot)\right\|_2^2,
\end{align*}
whence $(i)$ follows.
To proceed with $(ii)$, we apply the Minkowski inequality in order to get the following bounds:
\begin{align*}
\MoveEqLeft[0]
\left\| \varsigma_{j,t}(u)(s,\cdot)- \varsigma_{j,t}(\tilde{u})(s,\cdot)-\varsigma_{j,t}(u)(r,\cdot)+ \varsigma_{j,t}(\tilde{u})(r,\cdot) \right\|_2^2
\\
&= \int_\R\Big|\int_\R \left[ G(t-s, x, y)- G(t-r, x, y) \right]\left[ h(u(r,y))-h(\tilde{u}(r,y))\right]e_j(y)
\\
&\quad+ G(t-s, x, y)\left[ h(u(s,y))-h(\tilde{u}(s,y)) -h(u(r,y))+h(\tilde{u}(r,y))  \right]e_j(y)\,dy   \Big|^2\,dx
\\
&\leq C \int_\R\biggl|\int_\R \left| G(t-s, x, y)- G(t-r, x, y) \right|\left| h(u(r,y))-h(\tilde{u}(r,y))\right|dy\biggr|^2dx
\\
&\quad+ C \int_\R\biggl|\int_\R |G(t-s, x, y)|\left| h(u(s,y))-h(\tilde{u}(s,y)) -h(u(r,y))+h(\tilde{u}(r,y))  \right|dy   \biggr|^2\!dx
\\
&\eqqcolon H_1+H_2.
\end{align*}
It follows from the Cauchy--Schwartz inequality that
\begin{multline*}
H_1\leq C\int_\R \left( \int_\R\left| G(t-s, x, y)- G(t-r, x, y) \right|\,dy \right)
\\*
\times\left( \int_\R\left| G(t-s, x, y)- G(t-r, x, y) \right|\, \left[ h(u(r,y))-h(\tilde{u}(r,y))\right]^2  dy \right)\,dx.
\end{multline*}
By  Fubini's theorem,
\begin{align*}
H_1 &\leq  C\int_\R(t-s)^{-\delta}\,(s-r)^{\delta}
\\
&\quad\times\left( \int_\R\left| G(t-s, x, y)- G(t-r, x, y) \right|\, \left[ h(u(r,y))-h(\tilde{u}(r,y))\right]^2  dy \right)\,dx
\\ 
&\leq  C(t-s)^{-\delta}\,(s-r)^{\delta}
\\
&\quad\times\int_\R\left( \left[ h(u(r,y))-h(\tilde{u}(r,y))\right]^2\int_\R\left| G(t-s, x, y)- G(t-r, x, y) \right|\,   dx \right)\,dy.
\end{align*}
Then, using  Corollary \ref{cor4} we obtain
\[
H_1 \leq \,C\, (t-s)^{-2\delta}\,(s-r)^{2\delta}\,\left\| u(r,\cdot)-\tilde{u}(r,\cdot)\right\|_2^2.
\]
For the other term, we get by using Cauchy-Schwartz inequality, Lemma \ref{lem1},  and Fubini's theorem,
\begin{align*}
H_2&\leq\,C\int_\R\left(\int_\R |G(t-s, x, y)|\,dy\right)
\\
&\quad\times
\left(\int_\R |G(t-s, x, y)|\left[ h(u(s,y))-h(\tilde{u}(s,y)) -h(u(r,y))+h(\tilde{u}(r,y))  \right]^2 dy\right)dx
\\
&\leq C \int_\R
\biggl(\left[ h(u(s,y))-h(\tilde{u}(s,y)) -h(u(r,y))+h(\tilde{u}(r,y))  \right]^2
\\
&\quad\times
\int_\R |G(t-s, x, y)|dx\biggr)dy,
\end{align*}
and Lemma \ref{lem1} allow us to get
\[
H_2 \leq C \left\|h(u(s,\cdot))-h(\tilde{u}(s,\cdot))-h(u(r,\cdot))+h(\tilde{u}(r,\cdot))   \right\|_2^2.
\]
It follows from $(\ref{lin})$ that 
\[
H_2 \leq C \left\|u(s,\cdot)-\tilde{u}(s,\cdot)-u(r,\cdot) +\tilde{u}(r,\cdot)\right\|_2^2.
\]
Hence, we get the result.
\end{proof} 

\begin{lemma}\label{lem10}
Let  $\{u(t,\cdot),t\in[0,T]\}$ and $\{\tilde u(t,\cdot),t\in[0,T]\}$ be two $L^2(\R)$-valued random processes.
Then 
\begin{enumerate}[(i)]
\item
 for all    $0<v<s<t<T$  and  for every $\delta\in (\frac13,1)$,
\[
\sup_{j\in \mathbb{N}}\left\| \varsigma^*_{j,t,s}(u)(v,\cdot)- \varsigma^*_{j,t,s}(\tilde{u})(v,\cdot) \right\|_2 \leq\,C \,(t-s)^{\frac{\delta}{2}}\,(s-v)^{-\frac{\delta}{2}}\, \left\|u(v,\cdot)-\tilde{u}(v,\cdot)\right\|_2,
\] 
\item
 for all    $0<r<v<s<t<T$  and for any $\delta \in(\frac{1}{5},1),$ $\delta' \in(\frac{1}{3},1)$,
\begin{align*}
\MoveEqLeft\sup_{j\in \mathbb{N}}\left\| \varsigma^*_{j,t,s}(u)(v,\cdot)- \varsigma^*_{j,t,s}(\tilde{u})(v,\cdot)-\varsigma^*_{j,t,s}(u)(r,\cdot)+ \varsigma^*_{j,t,s}(\tilde{u})(r,\cdot) \right\|_2
\\
&\leq C (t-s)^{\frac{\delta}{2}}\,(s-v)^{-\delta}\,(v-r)^{\frac{\delta}{2}}\,\left\| u(r,\cdot)-\tilde{u}(r,\cdot)\right\|_2
\\
&\quad + C (t-s)^{\frac{\delta'}{2}}\,(s-v)^{-\frac{\delta'}{2}} \left\|u(s,\cdot)-\tilde{u}(s,\cdot)-u(r,\cdot)+\tilde{u}(r,\cdot)   \right\|_2.
\end{align*}
\end{enumerate}
\end{lemma}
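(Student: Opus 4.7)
The plan is to mimic the structure of the proof of Lemma \ref{lem9b}, exploiting the additional temporal-difference structure of $\varsigma^*_{j,t,s}$. Starting from \eqref{eq:zeta} and \eqref{eq:zeta*}, I would write
\[
\varsigma^*_{j,t,s}(u)(v,x) - \varsigma^*_{j,t,s}(\tilde u)(v,x) = \int_\R \bigl[G(t-v,x,y) - G(s-v,x,y)\bigr] \bigl[h(u(v,y)) - h(\tilde u(v,y))\bigr] e_j(y) \, dy.
\]
Because $h$ is affine (see \eqref{lin}), we have $h(u) - h(\tilde u) = h_1(u - \tilde u)$, so no constant term survives and the $h$-factors are linear in the differences of $u$ and $\tilde u$.

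For part $(i)$, I would apply the H\"older inequality in the form
\[
\left|\int_\R f(y) g(y) \, dy\right|^2 \le \int_\R |f(y)| \, dy \cdot \int_\R |f(y)| \, g(y)^2 \, dy,
\]
with $f(y) = G(t-v,x,y) - G(s-v,x,y)$ and $g(y) = [h(u(v,y))-h(\tilde u(v,y))] e_j(y)$. By Corollary \ref{cor4} the first factor is bounded uniformly in $x$ by $C(s-v)^{-\delta}(t-s)^{\delta}$. Integrating over $x$, applying Fubini's theorem, and using Corollary \ref{cor2} to bound the remaining $x$-integral of $|G|$ by a constant, together with the affine form of $h$ and the bound $\sup_j \|e_j\|_\infty < \infty$ from \eqref{ej}, yields the stated estimate.

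For part $(ii)$, I would first decompose the expression by adding and subtracting a cross term:
\begin{align*}
&\varsigma^*_{j,t,s}(u)(v,x) - \varsigma^*_{j,t,s}(\tilde u)(v,x) - \varsigma^*_{j,t,s}(u)(r,x) + \varsigma^*_{j,t,s}(\tilde u)(r,x) \\
&= \int_\R \bigl[G(t-v,x,y) - G(s-v,x,y)\bigr] \bigl[h(u(v,y)) - h(\tilde u(v,y)) - h(u(r,y)) + h(\tilde u(r,y))\bigr] e_j(y) \, dy \\
&\quad + \int_\R \bigl[G(t-v,x,y) - G(s-v,x,y) - G(t-r,x,y) + G(s-r,x,y)\bigr] \bigl[h(u(r,y)) - h(\tilde u(r,y))\bigr] e_j(y) \, dy.
\end{align*}
The first summand is handled exactly as in part $(i)$: Cauchy--Schwarz, Corollary \ref{cor4} with exponent $\delta'$, Fubini, and Corollary \ref{cor2} together give the bound $C(t-s)^{\delta'/2}(s-v)^{-\delta'/2}\|u(v,\cdot)-\tilde u(v,\cdot)-u(r,\cdot)+\tilde u(r,\cdot)\|_2$, matching the second line on the right-hand side of $(ii)$ (modulo the apparent typographic $s$ in place of $v$). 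For the second summand, I apply the same Cauchy--Schwarz split, but now the first factor is the $L^1(dx)$-norm of a \emph{second}-order mixed difference of $G$'s, which by Lemma \ref{lem6} is bounded by $C(t-s)^{\delta}(s-v)^{-2\delta}(v-r)^{\delta}$ provided $\delta > \tfrac15$; combined with the $\|u(r,\cdot)-\tilde u(r,\cdot)\|_2$-factor produced by the affinity of $h$ after a Fubini step, this yields the first term on the right-hand side of $(ii)$.

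The only genuinely new ingredient compared with Lemmata \ref{lem8}--\ref{lem9b} is the use of Lemma \ref{lem6} for the second-order difference of $G$, which is the step that forces the restriction $\delta > \tfrac15$ and accounts for the slightly different exponents in the two summands; everything else is a routine adaptation of the H\"older/Fubini/Corollary~\ref{cor2}/Corollary~\ref{cor4} pattern already used repeatedly in the paper, so I do not anticipate further obstacles.
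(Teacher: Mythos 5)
Your proposal is correct and follows essentially the same route as the paper: the identical add-and-subtract decomposition into a term with the first-order difference of $G$ against the double increment of $h(u)-h(\tilde u)$ and a term with the second-order mixed difference of $G$ against $h(u(r,\cdot))-h(\tilde u(r,\cdot))$, handled by the same Cauchy--Schwarz/Fubini pattern with Corollaries~\ref{cor2} and~\ref{cor4} and with Lemma~\ref{lem6} supplying the $(t-s)^{\delta}(s-v)^{-2\delta}(v-r)^{\delta}$ bound. The only (immaterial) deviation is that in part $(i)$ you extract the $(t-s)^{\delta}(s-v)^{-\delta}$ factor from the $dy$-integral and bound the $dx$-integral by a constant, whereas the paper does the reverse; both are licensed by Corollaries~\ref{cor2} and~\ref{cor4}, and you correctly identify the $s$-versus-$v$ slip in the paper's statement as typographical.
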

\begin{proof}
In order to prove $(i)$, we can apply the Cauchy--Schwartz  inequality, Corollaries \ref{cor2} and \ref{cor4}, and get the following inequalities:
\begin{align*}
\MoveEqLeft
\left\| \varsigma^*_{j,t,s}(u)(v,\cdot)- \varsigma^*_{j,t,s}(\tilde{u})(v,\cdot) \right\|_2^2
= \int_\R\left|   \varsigma^*_{j,t,s}(u)(v,x)- \varsigma^*_{j,t,s}(\tilde{u})(v,x)  \right|^2\,dx
\\
&= \int_\R\left| \int_\R \left[G(t-v, x, y)-G(s-v, x, y)\right]\left[h(u(v,y))-h(\tilde{u}(v,y)) \right]e_j(y)\,dy\right|^2dx
\\
&\leq C \int_\R \left(\int_\R \left|G(t-v, x, y)-G(s-v, x, y)\right|dy \right)
\\
&\quad\times\left(\int_\R \left|G(t-v, x, y)-G(s-v, x, y)\right|\left[ h(u(v,y))-h(\tilde{u}(v,y)) \right]^2dy \right) dx
\\
&\leq C \int_\R\left[ h(u(s,y))-h(\tilde{u}(s,y)) \right]^2\left(\int_\R | G(t-v, x, y)-G(s-v, x, y) |dx \right) dy
\\
&\leq C (t-s)^{\delta}\,(s-v)^{-\delta}  \;\left\|u(s,\cdot)-\tilde{u}(s,\cdot)\right\|_2^2.
\end{align*}
In the other hand, we have by using the fact that for every $a,b\in \R,\;(a+b)^2 \leq 2(a^2+b^2)$
\begin{align*}
\MoveEqLeft
\left\| \varsigma^*_{j,t,s}(u)(v,\cdot)- \varsigma^*_{j,t,s}(\tilde{u})(v,\cdot)-\varsigma^*_{j,t,s}(u)(r,\cdot)+ \varsigma^*_{j,t,s}(\tilde{u})(r,\cdot) \right\|_2^2
\\
&= \int_\R\left| \varsigma^*_{j,t,s}(u)(v,x)- \varsigma^*_{j,t,s}(\tilde{u})(v,x)-\varsigma^*_{j,t,s}(u)(r,x)+ \varsigma^*_{j,t,s}(\tilde{u})(r,x) \right|^2\,dx
\\ 
&= \int_\R \biggl| \int_\R [G(t-v, x, y)-G(s-v, x, y)]
\\
&\quad\times[ h(u(v,y))-h(\tilde{u}(v,y))-h(u(r,y))+h(\tilde{u}(r,y)) ] e_j(y)\,dy 
\\
&+ \int_\R [h(u(r,y))-h(\tilde{u}(r,y))]
\\
&\quad\times[G(t-v, x, y)-G(s-v, x, y)-G(t-r, x, y)+G(s-r, x, y)]e_j(y)\,dy\biggr|^2\!dx
\\ 
&\leq C\int_\R \biggl| \int_\R [G(t-v, x, y)-G(s-v, x, y)]
\\*
&\quad\times[h(u(v,y))-h(\tilde{u}(v,y))-h(u(r,y))+h(\tilde{u}(r,y)) ] e_j(y)\,dy\biggr|^2 dx  
\\
&\quad + C\int_\R\biggl|\int_\R [h(u(r,y))-h(\tilde{u}(r,y))]
\\
&\quad\times[G(t-v, x, y)-G(s-v, x, y)-G(t-r, x, y)+G(s-r, x, y)]e_j(y)\,dy\biggr|^2\! dx
\\ 
&\eqqcolon D_1+D_2.
\end{align*}
Applying the Cauchy--Schwarz inequality and Fubini's theorem, combined with Corollaries \ref{cor2} and \ref{cor4}, we get 
\begin{align*}
D_1 &= \int_\R \Big| \int_\R [G(t-v, x, y)-G(s-v, x, y)]
\\
&\quad\times[ h(u(v,y))-h(\tilde{u}(v,y))-h(u(r,y))+h(\tilde{u}(r,y)) ] e_j(y)\,dy\Big|^2 dx 
\\
&\leq \int_\R\left(\int_\R | G(t-v, x, y)-G(s-v, x, y)|dy\right)
\biggl( \int_\R |G(t-v, x, y)-G(s-v, x, y)|
\\
&\quad\times| h(u(v,y))-h(\tilde{u}(v,y))-h(u(r,y))+h(\tilde{u}(r,y)) |^2 \,dy\biggr)  dx 
\\
&\leq  C\, (t-s)^{\delta'}\,(s-v)^{-\delta'} \left\|u(s,\cdot)-\tilde{u}(s,\cdot)-u(r,\cdot)+\tilde{u}(r,\cdot)   \right\|_2^2.
\end{align*} 
By the same technique as before and from Lemma \ref{lem6}, we obtain
\begin{align*}
D_2 &=\int_\R \biggl| \int_\R [G(t-v, x, y)-G(s-v, x, y)-G(t-r, x, y)+G(s-r, x, y)]
\\*
&\quad\times[ h(u(r,y))-h(\tilde{u}(r,y)) ] e_j(y)\,dy\biggr|^2 dx 
\\
&\leq \int_\R\left(\int_\R | G(t-v, x, y)-G(s-v, x, y)-G(t-r, x, y)+G(s-r, x, y)|dy\right)
\\
&\quad\times  \biggl( \int_\R |G(t-v, x, y)-G(s-v, x, y)-G(t-r, x, y)+G(s-r, x, y)|
\\
&\quad\quad\times| h(u(r,y))-h(\tilde{u}(r,y)) |^2 \,dy\biggr)  dx 
\\
&\leq  C\, (t-s)^{\delta}\,(s-v)^{-2\delta} (v-r)^{\delta}\left\|u(r,\cdot)-\tilde{u}(r,\cdot)   \right\|_2^2.
\end{align*}
Hence, we get the result. 
\end{proof}

\section*{Acknowledgments}
Y. Mishura and K. Ralchenko acknowledge that the present research is carried through within the
frame and support of the ToppForsk project nr. 274410 of the Research Council of
Norway with title STORM: Stochastics for Time-Space Risk Models.

\end{document}